%

\documentclass[aap,MSNbibl,nameyear,seceqn,dvips]{arximspdf}
\usepackage{dcolumn}
\usepackage{graphicx}
%

\doi{10.1214/12-AAP884} 
\volume{23}
\issue{4}
\pubyear{2013}
\firstpage{1692}
\lastpage{1720}

\makeatletter
\newcolumntype{d}[1]{D{.}{.}{#1}}
\newcommand{\rrvert}{\vert}
\newcommand{\llvert}{\vert}
\newcommand{\eqref}[1]{(\ref{#1})}
\renewcommand{\citep}[1]{(\citeauthor{#1} \citeyear{#1})}
\newtheorem{thmm}{Theorem}[section]
\newtheorem{prop}[thmm]{Proposition}
\newtheorem{cor}[thmm]{Corollary}
\newproclaim{rem}{Remark}
\newproclaim{rems}{Remarks}
\newproclaim{defn}{Example}
\newproclaim{desc}{Description}

%
%
%

%

\newcommand{\tv}{\operatorname{TV}}
\newcommand{\separ}{\operatorname{sep}}
\newcommand{\var}{\operatorname{var}}

\newcommand{\pp}{\mathbb{P}}
\makeatother

\begin{document}
\begin{frontmatter}

\title{Analysis of casino shelf shuffling machines}
\runtitle{Shelf shuffling}

\begin{aug}
\author[A]{\fnms{Persi} \snm{Diaconis}\thanksref{t1}\ead[label=e1]{diaconis@math.stanford.edu}},
\author[B]{\fnms{Jason} \snm{Fulman}\thanksref{t2}\ead[label=e2]{fulman@usc.edu}}
\and
\author[C]{\fnms{Susan} \snm{Holmes}\corref{}\thanksref{t3}\ead[label=e3]{susan@stat.stanford.edu}}
\address[A]{P. Diaconis\\
Departments of Mathematics and Statistics\\
Stanford University\\
Stanford, California 94305-4065\\
USA\\
\printead{e1}}
\address[B]{J. Fulman\\
Department of Mathematics\\
University of Southern California\\
Los Angeles, California 90089-2532\\
USA\\
\printead{e2}}
\address[C]{S. Holmes\\
Department of Statistics\\
Stanford University\\
Stanford, California 94305-4065\\
\printead{e3}}

\thankstext{t1}{Supported in part by NSF Grant DMS-08-04324.}
\thankstext{t2}{Supported in part by NSF Grant DMS-08-02082
and National Security Agency Grant H98230-08-1-0133.}
\thankstext{t3}{Supported in part by National Institutes of Health
Grant R01 GM086884.}
\runauthor{P. Diaconis, J. Fulman and S. Holmes}
\affiliation{Stanford University, University of Southern California and
Stanford~University}
\end{aug}

\received{\smonth{8} \syear{2011}}
\revised{\smonth{6} \syear{2012}}

%
\begin{abstract}
Many casinos routinely use mechanical card shuffling machines. We
were asked to evaluate a new product, a shelf shuffler. This leads
to new probability, new combinatorics and to some practical advice
which was adopted by the manufacturer. The interplay between theory,
computing, and real-world application is developed.
\end{abstract}

%
\begin{keyword}[class=AMS]
\kwd[Primary ]{60C05}
\kwd[; secondary ]{05A15}
\end{keyword}
\begin{keyword}
\kwd{Riffle shuffling}
\kwd{testing for randomness}
\kwd{valleys in permutations}
\end{keyword}

\end{frontmatter}

\section{Introduction}\label{sec1}

We were contacted by a manufacturer of casino equipment to evaluate a
new design for a casino card-shuffling machine. The machine, already
built, was a sophisticated ``shelf shuffler'' consisting of an opaque
box containing ten shelves. A deck of cards is dropped into the top of
the box. An internal elevator moves the deck up and down within the
box. Cards are sequentially dealt from the bottom of the deck onto the
shelves; shelves are chosen uniformly at random at the command of a
random number generator. Each card is randomly placed above or below
previous cards on the shelf with probability $1/2$. At the end, each
shelf contains about $1/10$ of the deck. The ten piles are now assembled
into one pile, in random order. The manufacturer wanted to know if one
pass through the machine would yield a well-shuffled deck.

Testing for randomness is a basic task of statistics. A standard
approach is to design some ad hoc tests such as: Where do the original
top and bottom cards wind up? What is the distribution of cards that
started out together? What is the distribution, after one shuffle, of
the relative order of groups of consecutive cards? Such tests
\textit{had} been carried out by the engineers who designed the
machine, and seemed satisfactory.

We find closed-form expressions for the probability of being at a
given permutation after the shuffle. This gives exact expressions for
various global distances to uniformity, for example, total variation. These
suggest that the machine has flaws. The engineers (and their bosses)
needed further convincing; using our theory, we were able to show that
a knowledgeable player could guess about 9 $1/2$ cards correctly in a
single run through a 52-card deck. For a well-shuffled deck, the
optimal strategy gets about 4 $1/2$ cards correct. This data
\textit{did} convince the company. The theory also suggested a useful
remedy. Journalist accounts of our shuffling adventures can be found
in \citeauthor{klarreich} (\citeyear{klarreich2,klarreich}), \citet{mackenzie}.

Section~\ref{sec2} gives background on casino shufflers, needed probability
and the literature of shuffling. Section~\ref{sec3} gives an analysis
of a
single shuffle; we give a closed formula for the chance that a deck of
$n$ cards passed through a machine with $m$ shelves is in final order
$w$. This is used to compute several classical distances to
randomness. In particular it is shown that, for $n$ cards, the
$l_\infty$ distance is asymptotic to $e^{1/12c^2}-1$ if the number of
shelves $m=cn^{3/2}$ and $n$ is large. The combinatorics of shelf
shufflers turns out to have connections to the ``peak algebra'' of
algebraic combinatorics. This allows nice formulas for the
distribution of several classical test statistics: the cycle structure
(e.g., the number of fixed points), the descent structure and the
length of the longest increasing subsequence.

Section~\ref{sec4} develops tools for analyzing repeated shelf shuffling.
Section~\ref{sec5} develops our ``how many can be correctly guessed'' tests.
This section also contains our final conclusions.

\section{Background}\label{sec2}

This section gives background and a literature review. Section~\ref{sec21}
treats shuffling machines; Section~\ref{sec22} gives probability background;
Section~\ref{sec23} gives an overview of related literature and results
on the
mathematics of shuffling cards.

\subsection{Card shuffling machines}\label{sec21}

Casinos worldwide routinely employ mechanical card-shuffling machines
for games such as blackjack and poker. For example, for a single deck
game, two decks are used. While the dealer is using the first deck in
the usual way, the shuffling machine mixes the second deck. When the
first deck is used up (or perhaps half-used), the second deck is
brought into play and the first deck is inserted into the machine.
Two-, four-, and six-deck machines of various designs are also in
active use.

The primary rationale seems to be that dealer shuffling takes time and
use of a machine results in approximately 20\% more hands per hour.
The machines may also limit dealer cheating.

The machines in use are sophisticated, precision devices, rented to
the casino (with service contracts) for approximately \$500 per month
per machine. One company told us they had about 8000 such machines in
active use; this amounts to millions of dollars per year. The
companies involved are substantial businesses, listed on the New York
Stock Exchange.

One widely used machine simulates an ordinary riffle shuffle by
pushing two halves of a single deck together using mechanical pressure
to make the halves interlace. The randomness comes from slight
physical differences in alignment and pressure. In contrast, the shelf
shufflers we analyze here use computer-generated pseudo-random numbers
as a source of their randomness.

The pressure shufflers require multiple passes (perhaps seven to ten)
to adequately mix 52 cards. Our manufacturer was keen to have a single
pass through suffice.

\subsection{Probability background}\label{sec22}

Let $S_n$ denote the group of permutations of $n$ objects. Let
$U(\sigma)=1/n!$ denote the uniform distribution on $S_n$. If $P$ is a
probability on $S_n$, the total variation, separation, and $l_\infty$
distances to uniformity are
\begin{eqnarray}
\label{pound} \|P-U\|_{\tv}&=&\frac12\sum_w\bigl|P(w)-U(w)\bigr|=
\max_{A\subseteq S_n}\bigl|P(A)-U(A)\bigr|\nonumber\\
&=&\frac12\max_{\|f\|_\infty\leq
1}\bigl|P(f)-U(f)\bigr|,
\\
\separ(P)&=&\max_w \biggl(1-\frac{P(w)}{U(w)} \biggr),\qquad\|P-U
\|_\infty=\max_w\biggl\llvert1-\frac{P(w)}{U(w)}\biggr
\rrvert.\nonumber
\end{eqnarray}
Note that $\|P-U\|_{\tv}\leq\separ(P)\leq\|P-U\|_\infty$. The first two
distances are less than~1; the $\|\cdot\|_\infty$ norm can be as large as
$n!-1$.

If one of these distances is suitably small, then many test statistics
evaluate to approximately the same thing under $P$ and $U$. This gives
an alternative to ad hoc tests. The methods developed below allow
exact evaluation of these and many further distances (e.g., chi-square
or entropy).

Repeated shuffling is modeled by convolution,
\[
P\ast P(w)=\sum_vP(v)P\bigl(wv^{-1}
\bigr),\qquad P^{*k}(w)=P\ast P^{*(k-1)}(w).
\]

All of the shelf shufflers generate ergodic Markov chains (even if
only one shelf is involved), and so $P^{*k}(w)\to U(w)$ as
$k\to\infty$. One question of interest is the quantitative measurement
of this convergence using one of the metrics above.

\subsection{Previous work on shuffling}\label{sec23}

\subsubsection*{Early work}

The careful analysis of repeated shuffles of a deck of cards has
challenged probabilists for over a century.
The first efforts were made by Hadamard\vadjust{\goodbreak} (\citeyear{Ha06}) in his review of Gibbs book on statistical mechanics.
Later,
Poincar\'{e}~(\citeyear{Po12}) studied the problem. These great mathematicians
proved that \textit{in principle} repeated shuffling \textit{would}
mix cards at an exponential rate but gave no examples or quantitative
methods to get useful numbers in practical problems.

\citet{borel} studied riffle shuffling and concluded heuristically
that about seven shuffles would be required to mix 52 cards. Emile
Borel also reported joint work with Paul Levy, one of the great
probabilists of the twentieth century; they posed some problems but
were unable to make real progress.

Isolated but serious work on shuffling was reported in a 1955 Bell
Laboratories report by Edgar Gilbert. He used information theory to
attack the problems and gave some tools for riffle shuffling developed
jointly with Claude Shannon.

They proposed what has come to be called the Gilbert--Shannon--Reeds
model for riffle shuffling; this presaged much later work.
\citet{thorp} proposed a less realistic model and showed how poor
shuffling could be exploited in casino games. Thorp's model is
analyzed in \citet{morris}. \citet{epstein} reports practical studies
of how casino dealers shuffle with data gathered with a very precise
microphone! The upshot of this work was a well-posed mathematics
problem and some heuristics; further early history appears in Chapter
4 of \citet{D88}.

\subsubsection*{The modern era}

The modern era in quantitative analysis of shuffling begins with
papers of \citet{DS} and \citet{aldous}. They introduced rigorous
methods, Fourier analysis on groups and coupling. These gave sharp
upper and lower bounds, suitably close, for real problems. In
particular, Aldous sketched out a proof that $\frac32\log_2n$ riffle
shuffles mixed $n$ cards. A more careful argument for riffle shuffling
was presented by \citet{AD}. This introduced ``strong stationary
times,'' a powerful method of proof which has seen wide
application. It is applied here in Section~\ref{sec4}.

A definitive analysis of riffle shuffling was finally carried out in
\citet{bayer} and \citet{DMP}. They were able to derive simple
closed-form expressions for all quantities involved and do exact
computations for $n=52$ (or 32 or 104 or $\ldots$). This results
in the ``seven shuffles theorem'' explained below. A clear elementary
account of these ideas is in Mann (\citeyear{mann94,mann95}) reprinted in
\citet{grin}. See \citet{ethier} for an informative textbook account.

The successful analysis of shuffling led to a host of developments,
the techniques refined and extended. For example, it is natural to
want not only the order of the cards, but also the ``up-down pattern''
of one-way backs to be randomized. Highlights include work of
\citet{bidi} and \citet{brown} who gave a geometric interpretation of
shuffling which had many extensions to which the same analysis
applied. \citeauthor{lalley1} (\citeyear{lalley1,lalley2}) studied less random methods of riffle
shuffling. \citeauthor{F4} (\citeyear{F4,F3,F1}) showed that interspersing cuts does not
materially effect things and gave high level explanations for
miraculous accidents connecting shuffling and Lie theory. The work is
active and ongoing. Recent surveys are given by
\citeauthor{D96} (\citeyear{D96,D98}), \citet{fulman98,oconn}.

In recent work, \citet{DMP}, Conger and Vis\-wanath (\citeyear{conger2}) and \citet{assaf} have studied the
number of shuffles required to have selected features randomized
(e.g., the original top card, or the values but not the suits). Here,
fewer shuffles suffice. \citet{conger} shows that the way the cards
are dealt out after shuffling affects things. The mathematics of
shuffling is closely connected to modern algebraic combinatorics
through quasi-symmetric functions [\citet{stanley2}]. The descent theory
underlying shuffling makes equivalent appearances in the basic task of
carries when adding integers [Diaconis and Fulman (\citeyear{DFa,DFb,DFc})].

\section{Analysis of one pass through a shelf shuffler}\label{sec3}

This section gives a fairly complete analysis of a single pass through
a shelf shuffler. Section~\ref{newsec31} gives several equivalent descriptions
of the shuffle. In Section~\ref{sec31}, a closed-form formula for the chance
of any permutation $w$ is given. This in turn depends only on the
number of ``valleys'' in $w$. The number of permutations with $j$
valleys is easily calculated, and so exact computations for any of the
distances above are available. Section~\ref{newsec33} uses the exact formulas
to get asymptotic rates of convergence for $l_\infty$ and separation
distances. Section~\ref{sec32} gives the distribution of such
permutations by
cycle type. Section~\ref{sec33} gives the distribution of the ``shape'' of
such a permutation under the Robinson--Schensted--Knuth map.
Section~\ref{sec34} gives the distribution of the number of descents.
We find
it surprising that a real-world applied problem makes novel contact
with elegant combinatorics. In Section~\ref{sec4}, iterations of a shelf
shuffler are shown to be equivalent to shelf shuffling with more
shelves. Thus all of the formulas of this section apply.

\subsection{Alternative descriptions}\label{newsec31}

Consider two basic shelf shufflers: for the first, a~deck of $n$ cards
is sequentially distributed on one of $m$ shelves. (Here, $n=52,
m=10$, are possible choices.) Each time, the cards are taken from the
\textit{bottom} of the deck, a shelf is chosen at random from one to
$m$, and the bottom card is placed on top of any previous cards on the
shelf. At the end, the packets on the shelves are unloaded into a
final deck of $n$. This may be done in order or at random; it turns
out not to matter. \citet{bayer} called this an \textit{inverse
$m$-shuffle}.

The second shuffling scheme, that is the main object of the present
study, is based on
$m$ shelves. At each stage that a card is\vadjust{\goodbreak} placed on a shelf, the
choice of whether to put it on the top or the bottom of the existing
pile on that shelf is made at random ($1/2$ each side). This will be
called a \textit{shelf shuffle}. There are several equivalent
descriptions of shelf shuffles:

%
\begin{figure}[b]

\includegraphics{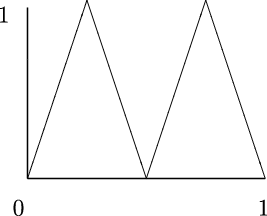}

\caption{Two shelves in shelf shuffle.}
\label{fig1}
\end{figure}

%
\begin{desc}[(Shelf shuffles)]\label{desc1}
A deck of cards is initially in order $1,2,3,\ldots,n$. Label the
back of each card with $n$ random numbers chosen at random between
one and $2m$. Remove all cards labeled 1 and place them on top,
keeping them in the same relative order. Then remove all cards
labeled 2 and place them under the cards labeled 1, reversing their
relative order. This continues with the cards labeled 3, labeled 4,
and so on, reversing the order in each even labeled packet. If at
any stage there are no cards with a given label, this empty packet
still counts in the alternating pattern.

For example, a twelve-card deck with $2m=4$,\vspace*{4pt}
\begin{center}
\begin{tabular}{lcccccccccccc}
Label&2&1&1&4&3&3&1&2&4&\phantom{0}3&\phantom{0}4&\phantom{0}1\\
Card&1&2&3&4&5&6&7&8&9&10&11&12
\end{tabular}\vspace*{4pt}
\end{center}
is reordered as\vspace*{4pt}
\begin{center}
\begin{tabular}{cccccccccccc}
2&3&7&12&8&1&5&6&10&11&9&4.
\end{tabular}
\end{center}
\end{desc}

%
\begin{desc}[(Inverse shelf shuffles)]\label{desc2}
Cut a deck of $n$ cards into $2m$ piles according to a multinomial
distribution; thus the number of cards cut off in pile $i$ has the
same distribution as the number of balls in the $i$th box if $n$
balls are dropped randomly into $2m$ boxes. Reverse the order of
the even-numbered packets. Finally, riffle shuffle the $2m$ packets
together by the Gilbert--Shannon--Reeds (GSR) distribution
\citet{bayer} dropping each card sequentially with probability
proportional to packet size. This makes all possible interleavings
equally likely.
\end{desc}

%
%

%
\begin{desc}[(Geometric description)]\label{desc3}
Consider the function $f_m(x)$ from $[0,1]$ to $[0,1]$ which has
``tents,'' each of slope $\pm2m$ centered at
$\frac1{2m},\frac3{2m},\break\frac5{2m},\ldots, \frac{2m-1}{2m}$.
Figure~\ref{fig1} illustrates an example with $m=2$. Place $n$ labeled
points uniformly at random into the unit interval. Label them, from
left to right, $x_1,x_2,\ldots,x_n$. Applying $f_m$ gives
$y_i=f_m(x_i)$. This gives the permutation
\[
\begin{tabular}{cccc} 1&2&$\cdots$&$n$
\\
$\pi_1$&$\pi_2$&$\cdots$&$\pi_n$
\end{tabular}
\]
with $\pi_1$ the relative position from the bottom of
$y_1,\ldots,\pi_i$ the relative position from the bottom of $y_i$ among
the other $y_j$. This permutation has the distribution of an inverse
shelf shuffle. It is important to note that the natural distances to
uniformity (total variation, separation, $l_\infty$) are the same for
inverse shuffles and forward shuffles. In Section~\ref{sec4}, this description
is used to show that repeated shelf shuffling results in a shelf
shuffle with more shelves.\looseness=1
\end{desc}

\subsection{Formula for the chance of a permutation produced by a shelf
shuffler}\label{sec31}

To describe the main result, we call $i$ a \textit{valley} of the
permutation $w\in S_n$ if $1<i<n$ and $w(i-1)>w(i)<w(i+1)$. Thus
$w=5{\underline{1}}367{\underline{2}}4$ has two valleys. The number of valleys is
classically used as a test of randomness for time series. See
\citet{warren} and their references. If $v(n,k)$ denotes the number of
permutations on $n$ symbols with $k$ valleys, then \citet{warren}
$v(1,0)=1, v(n,k)=(2k+2)v(n-1,k)+(n-2k)v(n-1,k-1)$. So $v(n,k)$ is
easy to compute for numbers of practical interest. Asymptotics are in
\citet{warren} which also shows the close connections between valleys
and descents.

%
\begin{thmm}\label{shelf}
The chance that a shelf shuffler with $m$ shelves and $n$ cards
outputs a permutation $w$ is
\[
\frac{4^{v(w)+1}}{2(2m)^n}\sum_{a=0}^{m-1}
\pmatrix{n+m-a-1
\cr
n}\pmatrix{n-1-2v(w)
\cr
a-v(w)},
\]
where $v(w)$ is the number of valleys of $w$. This can be seen to be
the coefficient of~$t^m$ in
\[
\frac{1}{2(2m)^n}\frac{(1+t)^{n+1}}{(1-t)^{n+1}} \biggl(\frac{4t}{(1+t)^2}
\biggr)^{v(w)+1}.
\]
\end{thmm}

%
\begin{defn*}
Suppose that $m=1$. Then the theorem yields the uniform distribution
on the $2^{n-1}$ permutations with no valleys;
a permutation with one or more valleys
occurs with probability 0.
Permutations with no
valleys are also sometimes called unimodal permutations. These arise
in social choice theory through Coombs's ``unfolding'' hypothesis
\citet{D88}, Chapter~6. They also appear in dynamical systems and magic tricks
see \citet{DiaconisGraham2011}, Chapter~5.
\end{defn*}

%
\begin{rem*}
By considering the cases $m\geq n$ and $n\geq m$ we see that, in the
formula of Theorem~\ref{shelf}, the range of summation can be taken
up to $n-1$ instead of $m-1$. This will be useful later.
\end{rem*}

Theorem~\ref{shelf} makes it easy to compute the distance to
stationarity for any of the metrics in Section~\ref{sec22}. Indeed, the
separation and $l_\infty$ distance is attained at either permutations
with a maximum number of valleys (when $n=52$, this maximum is~25) or
for permutations with 0 valleys. For the total variation distance,
with $P_m(v)$ denoting the probability in Theorem~\ref{shelf},
\[
\|P_m-U\|_{\tv}=\frac12\sum_{a=0}^{\lfloor{(n-1)}/2\rfloor}v(n,a)
\biggl\llvert P_m(a)-\frac1{n!}\biggr\rrvert.
\]

%
\begin{table}
\tabcolsep=0pt
\caption{Distances for various numbers of shelves $m$}\label{table1}
\begin{tabular*}{\textwidth}{@{\extracolsep{\fill}}lcccccccccccc@{}}
\hline
$\bolds{m}$&\textbf{10}&\textbf{15}&\textbf{20}&\textbf{25}&\textbf
{30}&\textbf{35}&\textbf{50}&\textbf{100}&\textbf{150}&\textbf
{200}&\textbf{250}&\textbf{300}\\
\hline
$\|P_m-U\|_{\tv
}$&1&0.943&0.720&0.544&0.391&0.299&0.159&0.041&0.018&0.010&0.007&0.005\\[2pt]
$\separ(P_m)$&1&1&1&1&1&0.996&0.910&0.431&0.219&0.130&0.085&0.060\\[2pt]
$\|P_m-U\|_\infty$&$\infty$&$\infty$&$\infty$&45\mbox
{,}118&3961&716&39&1.9&0.615&0.313&0.192&0.130\\
\hline
\end{tabular*}
\end{table}

Table~\ref{table1} gives these distances when $n=52$ for various
numbers of
shelves~$m$. Larger values of $m$ are of interest because of the
convolution results explained in Section~\ref{sec4}. These numbers show that
ten shelves are woefully insufficient. Indeed, 50 shelves are hardly
sufficient.

To prove Theorem~\ref{shelf}, we will relate it to the following
$2m$-shuffle on the hyperoctahedral group $B_n$: cut the deck
multinomially into $2m$ piles. Then flip over the odd numbered stacks,
and riffle the piles together, by dropping one card at a time from one
of the stacks (at each stage with probability proportional to stack
size). When $m=1$ this shuffle was studied in \citet{bayer}, and for
larger $m$ it was studied in \citet{F1}.

It will be helpful to have a description of the inverse of this
$2m$-shuffle. To each of the numbers $\{1,\ldots,n\}$ is assigned
independently and uniformly at random one of $-1,1,-2,2,\ldots,-m,m$.
Then a signed permutation is formed by starting with numbers mapped to
$-1$ (in decreasing order and with negative signs), continuing with
the numbers mapped to $1$ (in increasing order and with positive
signs), then continuing to the numbers mapped to $-2$ (in decreasing
order and with negative signs), and so on. For example the assignment
{\renewcommand{\theequation}{$*$}
\begin{eqnarray}\label{eqZ}\quad
\{1,3,8\}&\mapsto&-1,\qquad\{5\}\mapsto1,\qquad\{2,7\}\mapsto2,
\nonumber
\\[-8pt]
\\[-8pt]
\nonumber
\{6\}&\mapsto&-3,\qquad\{
4\}
\mapsto3
\end{eqnarray}}
\hspace*{-2pt}leads to the signed permutation
{\renewcommand{\theequation}{$**$}
\begin{equation}\label{eqZZ}
\matrix{
-8 & -3&  -1& 5& 2 &7 &-6 &4 }.
\end{equation}}

The proof of Theorem~\ref{shelf} depends on an interesting relation
with shuffles for signed permutations (hyperoctahedral group). This is
given next followed by the proof of Theorem~\ref{shelf}.

Theorem~\ref{peaks} gives a formula for the probability for $w$ after
a hyperoctahedral $2m$-shuffle, when one forgets signs. Here $p(w)$ is
the number of peaks of~$w$, where~$i$ is said to be a peak of $w$ if
$1<i<n$ and $w(i-1)<w(i)>w(i+1)$. Also $\Lambda(w)$ denotes the peak
set of $w$ and $D(w)$ denotes the descent set of $w$ [i.e., the set of
points $i$ such that $w(i)>w(i+1)$]. Finally, let $[n] =\{1,\ldots,n\}$.

%
\begin{thmm}\label{peaks}
The chance of a permutation $w$ obtained by performing a $2m$ shuffle
on the hyperoctahedral group and then forgetting signs is
\[
\frac{4^{p(w^{-1})+1}}{2(2m)^n}\sum_{a=0}^{m-1}
\pmatrix{n+m-a-1
\cr
n}\pmatrix{n-1-2p\bigl(w^{-1}\bigr)
\cr
a-p
\bigl(w^{-1}\bigr)},
\]
where $p(w^{-1})$ is the number of peaks of $w^{-1}$.
\end{thmm}

\begin{pf}
Let $P'(m)$ denote the set of nonzero integers of absolute value at
most~$m$, totally ordered so that
\[
-1 \prec1 \prec-2 \prec2 \prec\cdots\prec-m \prec m .
\]
Then given a permutation $w=(w_1,\ldots,w_n)$, page 768 of \citet{stem}
defines a quantity $\Delta(w)$. (Stembridge calls it
$\Delta(w,\gamma)$, but throughout we always choose $\gamma$ to be the
identity map on $[n]$, and so suppress the symbol $\gamma$ whenever he
uses it.) By definition, $\Delta(w)$ enumerates the number of maps $f:
[n] \mapsto P'(m)$ such that:
\begin{itemize}
\item$f(w_1)\preceq\cdots\preceq f(w_n)$;
\item$f(w_i)=f(w_{i+1})>0 \Rightarrow i\notin D(w)$;
\item$f(w_i)=f(w_{i+1})<0 \Rightarrow i\in D(w)$.
\end{itemize}

We claim that the number of maps $f\dvtx[n]\mapsto P'(m)$ with the above
three properties is equal to $(2m)^n$ multiplied by the chance that a
hyperoctahedral $2m$-shuffle results in the permutation $w^{-1}$. This
is most clearly explained
continuing example~(\ref{eqZ}), (\ref{eqZZ}) above let $w,f$ be
\[
\begin{tabular}{lcccccccccr} $w  =  \phantom{-}8$ & \phantom{$-$}3 & \phantom{$-$}1 & 5 & 2 & 7 & \phantom{$-$}6 & 4,
\\
$f  =  -1$ & $-1$ & $-1$ & 1 & 2 & 2 & $-3$ & 3.
\end{tabular}
\]
Here, $f$ is monotone read left to right, $f(8) = f(3) = f(1) = -1$
corresponds to the descents in the first two positions and
$f(2) = f(7) = 2$ corresponds to the ascent.
This $f$ arises from the
description of the inverse hyperoctahedral $2m$-shuffle
in (\ref{eqZ}), (\ref{eqZZ}) above, the assignment yields $w$. This proves the
claim.\vadjust{\goodbreak}

Let $\Lambda(w)$ denote the set of peaks of $w$. From Proposition 3.5
of \citet{stem},
\[
\Delta(w) = 2^{p(w)+1} \sum_{E \subseteq[n-1]\dvtx\Lambda(w)
\subseteq E \triangle(E+1)}
L_E.
\]
Here $E+1$ means the elements of $E$ with one added to each,
\[
L_E=\mathop{\sum_{1\leq i_1\leq\cdots\leq i_n\leq m}}_{k\in
E\Rightarrow i_k<i_{k+1}}1.
\]
Here $L_E$ is the number of ordered $n$-tuples $(i_1,i_2,\ldots,i_n)$
of integers between $1$ and $m$ which are nondecreasing, and strictly
increasing at positions
$k$ in~$E$.
$\triangle$~denotes symmetric difference, that is, $A\triangle
B=(A-B)\cup(B-A)$. Now a simple combinatorial argument shows that
$L_E=\bigl({n+m-|E|-1\atop n}\bigr)$. Indeed, $L_E$ is equal to the number of
integral $i_1,\ldots,i_n$ with $1\leq i_1\leq\cdots\leq i_n\leq m-|E|$,
which by a ``stars and bars'' argument is $\bigl({n+m-|E|-1\atop n}\bigr)$.
Thus
\[
\Delta(w) = 2^{p(w)+1} \sum_{E \subseteq[n-1]\dvtx\Lambda(w)
\subseteq E \triangle(E+1)}
\pmatrix{n+m-|E|-1
\cr
n}.
\]
Now let us count the number of $E$ of size $a$ appearing in this sum.
For each $j\in\Lambda(w)$, exactly one of $j$ or $j-1$ must belong to
$E$, and the remaining $n-1-2p(w)$ elements of $[n-1]$ can be
independently and arbitrarily included in $E$. Thus the number of sets
$E$ of size $a$ appearing in the sum is
$2^{p(w)}\bigl({n-1-2p(w)\atop a-p(w)}\bigr)$. Hence
\[
\Delta(w)=\frac{4^{p(w)+1}}2\sum_{a=0}^{n-1}
\pmatrix{n+m-a-1
\cr
n}\pmatrix{n-1-2p(w)
\cr
a-p(w)},
\]
which completes the proof.
\end{pf}

\begin{pf*}{Proof of Theorem~\ref{shelf}} To deduce Theorem
\ref{shelf} from Theorem~\ref{peaks}, we see that a
shelf shuffle with $m$ shelves is equivalent to taking $w'$ to be
the inverse of a permutation after a hyperoctahedral $2m$-shuffle
(forgetting about signs),
then taking a permutation $w$ defined by $w(i)=n-w'(i)+1$. Thus the
shelf shuffle formula is obtained from the hyperoctahedral
$2m$-shuffle formula by replacing peaks by valleys.
\end{pf*}

%
\begin{rems*}

\begin{itemize}
\item The paper \citet{F1} gives an explicit formula for the chance of
a signed permutation after a $2m$-shuffle on $B_n$ in terms of
cyclic descents. Namely it shows this probability to be
\[
\frac{\bigl({m+n-cd(w^{-1})\atop n}\bigr)}{(2m)^n},\vadjust{\goodbreak}
\]
where $cd(w)$ is the number of cyclic descents of $w$, defined as
follows: Ordering the integers $1<2<3<\cdots<\cdots<-3<-2<-1$:
\begin{itemize}[$-$]

\item[$-$]$w$ has a cyclic descent at position $i$ for $1 \leq i \leq n-1$ if
$w(i)>w(i+1)$.
\item[$-$]$w$ has a cyclic descent at position $n$ if $w(n)<0$.
\item[$-$]$w$ has a cyclic descent at position $1$ if $w(1)>0$.
\end{itemize}
For example the permutation ${3\enskip 1 \enskip -2 \enskip 4\enskip 5 }$ has two cyclic
descents at position~1 and a cyclic descent at position 3, so
$cd(w)=3$.

This allows one to study aspects of shelf shufflers by lifting the
problem to~$B_n$, using cyclic descents (where calculations are often
easier), then forgetting about signs. This idea was used in \citet{F1}
to study the cycle structure of unimodal permutations, and in
\citet{aguiar04} to study peak algebras of types $B$ and $D$.

The idea of lifting the problem to type $B_n$ leads to a total variation
upper bound. Indeed, from the proof of Theorem~\ref{shelf} the total variation
distance after a shelf-shuffler with $m$ shelves to uniform is equal to the
total variation distance of a hyperoctahedral $2m$ shuffle to uniform, after
one forgets about signs. Now from \citet{bayer} or \citet{F1}, the
total variation distance
of a hyperoctahedral $2m$ shuffle to uniform, when one does not forget about
signs, is equal to the total variation distance of an ordinary $m$ riffle
shuffle to uniform on the symmetric group---a quantity thoroughly
studied in
\citet{bayer}. Thus the total variation distance after a shelf-shuffler
with $m$ shelves
to uniform is at most the total variation distance of an ordinary $m$ riffle
shuffle to uniform on the symmetric group.

\item The appearance of peaks in the study of shelf shufflers is
interesting, as peak algebras have appeared in various parts of
mathematics. \citet{nyman} proves that the peak algebra is a
subalgebra of the symmetric group algebra, and connections with
geometry of polytopes can be found in \citet{aguiar06} and
\citet{billera}. There are also close connections with the theory of
$P$-partitions [\citeauthor{P1} (\citeyear{P1,P2}), \citet{stem}].
\end{itemize}
\end{rems*}

The following corollary shows that for a shelf shuffler of $n$ cards
with $m$ shelves, the chance of a permutation $w$ with $v$ valleys is
monotone decreasing in $v$. Thus, the identity (or any other unimodal
permutation) is most likely and an alternating permutation$\ldots$ (down, up,
down, up$,\ldots$) is least likely. From Theorem~\ref{shelf},
the chance of a fixed permutation with $v$ valleys is
%
\setcounter{equation}{0}
\begin{equation}
P(v)=\frac{4^{v+1}}{2(2m)^n}\sum_{a=0}^{n-1}
\pmatrix{n+m-1-a
\cr
n}\pmatrix{n-1-2v
\cr
a-v}. \label{31}
\end{equation}

%
\begin{cor}
For $P(v)$ defined at \eqref{31}, $P(v)\geq P(v+1), 0\leq
v\leq(n-1)/2$.
\label{cor31}
\end{cor}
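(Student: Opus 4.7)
The plan is to exploit the generating-function form of $P(v)$ recorded in Theorem~\ref{shelf}, writing $P(v) = \frac{1}{2(2m)^n}[t^m]\,g_v(t)$ with $g_v(t) = \frac{(1+t)^{n+1}}{(1-t)^{n+1}}\bigl(\frac{4t}{(1+t)^2}\bigr)^{v+1}$. Since $g_{v+1}(t) = g_v(t)\cdot\frac{4t}{(1+t)^2}$, one can factor $g_v(t)-g_{v+1}(t)=g_v(t)\bigl(1-\frac{4t}{(1+t)^2}\bigr)$, and then invoke the elementary identity $1-\frac{4t}{(1+t)^2}=\frac{(1-t)^2}{(1+t)^2}$. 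After the obvious cancellations this collapses to
\begin{equation*}
P(v)-P(v+1)=\frac{4^{v+1}}{2(2m)^n}\,[t^m]\,\frac{t^{v+1}(1+t)^{n-3-2v}}{(1-t)^{n-1}}.
\end{equation*}

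It then suffices to show that the displayed coefficient is nonnegative. I would split into two cases. In the generic case $v\le(n-3)/2$, the exponent $n-3-2v$ is nonnegative, so $(1+t)^{n-3-2v}$ is a polynomial with nonnegative coefficients, and $(1-t)^{-(n-1)}$ is a power series with nonnegative coefficients; the product has nonnegative coefficients, and multiplying by $t^{v+1}$ preserves this. The edge cases are $v=(n-2)/2$ (if $n$ is even) and $v=(n-1)/2$ (if $n$ is odd), where $n-3-2v$ equals $-1$ or $-2$; writing $k=2v+3-n\in\{1,2\}$, one rewrites
$\frac{1}{(1-t)^{n-1}(1+t)^k}=\frac{1}{(1-t)^{n-1-k}(1-t^2)^k}$
by pairing $k$ copies of $(1-t)$ with $(1+t)^k$ in the denominator. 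A quick check shows $n-1-k\ge 0$ throughout the stated range of $v$, so both $(1-t)^{-(n-1-k)}$ and $(1-t^2)^{-k}$ have nonnegative coefficients, and we conclude as before (for $n\le 2$ the corollary is vacuous or immediate).

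The main obstacle is precisely this boundary case: a naive expansion of $(1+t)^{-k}$ has alternating signs, so positivity of the coefficients is not apparent. The regrouping $(1-t)(1+t)=1-t^2$ is what salvages positivity, and the whole argument ultimately rests on the fortunate algebraic identity $(1+t)^2-4t=(1-t)^2$ sitting at the heart of the generating function in Theorem~\ref{shelf}.
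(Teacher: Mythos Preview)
Your argument is correct and takes a genuinely different route from the paper's. The paper works directly with the sum in \eqref{31}, rewriting
\[
\frac{2(2m)^n}{4^{v+1}}\,P(v)=2^{\,n-1-2v}\,E\bigl[f(S_{n-1-2v}+v)\bigr],
\]
where $f(a)=\binom{n+m-1-a}{n}$ and $S_k$ is Binomial$(k,\tfrac12)$. Writing $S_{n-1-2v}=S_{n-3-2v}+Y_1+Y_2$ with independent fair bits $Y_i$ reduces $P(v)\ge P(v+1)$ to the pointwise inequality $\tfrac12 f(a)+\tfrac12 f(a+2)\ge f(a+1)$, i.e.\ to convexity of $f$; this is checked by a one-line computation that collapses to $n(n-1)\ge 0$.

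Your approach instead stays with the generating-function form recorded in Theorem~\ref{shelf}: the identity $(1+t)^2-4t=(1-t)^2$ makes $g_v-g_{v+1}$ factor into a series with visibly nonnegative coefficients, so in the main range $v\le(n-3)/2$ no inequality beyond the factorization is needed. The cost is the short case split at the boundary $v\in\{(n-2)/2,(n-1)/2\}$, which your $(1-t)(1+t)=1-t^2$ regrouping handles. A pleasant feature of your route is that it is uniform in $m$: the number of shelves enters only at the very end, when one reads off the coefficient of $t^m$, whereas the paper's convexity check involves $m$ throughout (though it cancels in the final step).
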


\begin{pf}
Canceling common terms, and setting $a-v=j$ (so $a=j+v$) in~\eqref{31}, we have
$\frac{2(2m)^n}{4^{v+1}}P(v)=\sum_{j=0}^{n-1-2v} f(j+v)\bigl({n-1-2v\atop
j}\bigr)=2^{n-1-2v}\times E(f(S_{n-1-2v}+v))$
with $f(a)=\bigl({n+m-1-a\atop n}\bigr)$ and $S_{n-1-2v}$ distributed as
binomial $(n-1-2v,\frac12)$. The proposed inequality is equivalent to
%
\begin{equation}
E \bigl(f (S_{n-1-2v}+v ) \bigr)\geq E \bigl(f (S_{n-1-2v-2}+v+1 )
\bigr). \label{32}
\end{equation}
To prove this, represent $S_{n-1-2v}=S_{n-1-2v-2}+Y_1+Y_2$, with $Y_i$
independent taking values in $\{0,1\}$, with probability $1/2$. Then
\eqref{32} is equivalent to
%
\begin{eqnarray}\label{33}
&&\sum_j \biggl[\frac14f(j+v)+\frac12f(j+v+1)+
\frac14f(j+v+2)-f(j+v+1) \biggr]
\nonumber
\\[-8pt]
\\[-8pt]
\nonumber
&&\qquad{}\times P\{S_{n-1-2v-2}=j\}\geq0.
\end{eqnarray}
Thus if $\frac12f(j+v)+\frac12f(j+v+2)\geq f(j+v+1)$, for example,
$f(a)$ is
convex, we are done. Writing out the expression
$f(a)+f(a+2)\geq2f(a+1)$ and canceling common terms, it must be shown
that
%
\begin{eqnarray}
&&(m+n-1-a) (m+n-2-a)+(m-1-a) (m-2-a)
\nonumber
\\[-8pt]
\\[-8pt]
\nonumber
&&\qquad\geq2(m+n-2-a) (m-1-a) \label{34}
\end{eqnarray}
for all $0\leq a\leq n-1$. Subtracting the right-hand side from the left,
the coefficients of $a^2$ and $a$ cancel, leaving $n(n-1)\geq0$.
\end{pf}

\subsection{\texorpdfstring{Asymptotics for the $\|P-U\|_\infty$ and separation distances}
{Asymptotics for the ||P-U||infinity and separation distances}}\label{newsec33}

Recall the distances
$\|P-U\|_\infty=\max_w\llvert1-\frac{P(w)}{U(w)}\rrvert$ and
$\separ(P)=\max_w (1-\frac{P(w)}{U(w)} )$.

%
\begin{thmm}\label{thm34}
Consider the shelf shuffling measure $P_m$ with $n$ cards and $m$
shelves. Suppose that $m=cn^{3/2}$. Then, as $n$ tends to infinity
with $0<c<\infty$ fixed,
\begin{eqnarray*}
\|P_m-U\|_\infty&\sim& e^{1/(12c^2)}-1,
\\
\separ(P_m)&\sim& 1-e^{-1/(24c^2)}.
\end{eqnarray*}
\end{thmm}

%
\begin{rem*}
We find it surprising that this many shelves are needed. For
example, when $n=52$, to make the distance less than $1/100$,
$m\doteq1085$ shelves are required for $\|P_m-U\|_\infty$ and
$m\doteq764$ are required for $\separ(P_m)$. The order $m\doteq n^{{3}/{2}}$
in Theorem~\ref{thm34} can be understood as follows:
\citet{bayer} show that it takes $k\doteq\frac{3}{2} \log_2{n}$
riffle shuffles to mix $n$ cards in total variation. Now $k$
riffle shuffles correspond to $2^k=n^{{3}/{2}}$ shelves. Of course,
the $\frac{3}{2}$ in riffle shuffling is the result of a careful computation.
\end{rem*}

\begin{pf}
Using Corollary~\ref{cor31}, the distance is achieved at the
identity permutation or a permutation with\vadjust{\goodbreak} $\lfloor(n-1)/2\rfloor$
valleys. For the identity, consider $n!P_m(\mathrm{id})$. Using
Theorem~\ref{shelf},
%
\begin{equation}
n!P_m(\mathrm{id})=\frac{2(n!)}{(2m)^n}\sum
_{a=0}^{n-1}\pmatrix{m+n-a-1
\cr
n}\pmatrix{n-1
\cr
a}.
\label{35}
\end{equation}
To bound this sum, observe that $\bigl({n-1\atop a}\bigr)/2^{n-1}$ is the
binomial probability density. To keep the bookkeeping simple, assume
throughout that $n$ is odd. The argument for even $n$ is similar.

For $a=\frac{n-1}2+j$, the local central limit theorem as in
\citeauthor{feller} [(\citeyear{feller}), Chapter~VII.2], shows
%
\begin{equation}
\frac{\bigl({n-1\atop {(n-1)}/2+j}\bigr)}{2^{n-1}}\sim\frac
{e^{-2j^2/n}}{\sqrt{\pi n/2}}\qquad \mbox{for }j=o\bigl(n^{2/3}
\bigr). \label{36}
\end{equation}
In the following, we show further that
%
\begin{eqnarray}\label{37}
\frac{n!}{m^n}\pmatrix{m+{(n-1)}/2-j
\cr
n}\sim e^{-1/{(24c^2)}+{j}/{(c\sqrt{n})}}
\nonumber
\\[-8pt]
\\[-8pt]
\eqntext{\mbox{uniformly for }j=o(n).}
\end{eqnarray}
Combining \eqref{36}, \eqref{37}, gives a Riemann sum for the integral
\[
\frac{e^{-1/{(24c^2)}}}{\sqrt{\pi/2}}\int_{-\infty}^\infty e^{-2x^2+x/c}
\,dx=e^{1/(12c^2)},
\]
the claimed result. This part of the argument follows
\citeauthor{feller} [(\citeyear{feller}), Chapter~VII.2], and we suppress further details. To
complete the argument the tails of the sum in~\eqref{35} must be
bounded.

We first prove \eqref{37}. From the definitions
\[
\frac{n!}{m^n}\pmatrix{m-j+{(n-1)}/2
\cr
n}=\prod
_{i=-{(n-1)}/2}^{{(n-1)}/2} \biggl(1-\frac{j}{m}+
\frac{i}{m} \biggr)
\]
using $\log(1-x)=-x-\frac{x^2}2+O(x^3)$,
%
\begin{eqnarray}
\label{38}&& \sum_{i=-{(n-1)}/2}^{{(n-1)}/2}\log\biggl(1-
\frac{j}{m}+\frac{i}{m} \biggr)\nonumber\\
 &&\qquad=-\sum
_i \biggl(-\frac{j}{m}+\frac{i}{m} \biggr)-
\frac12\sum_i \biggl(-\frac{j}{m}+
\frac{i}{m} \biggr)^2 +nO \biggl( \biggl(\frac{n}{m}
\biggr)^3 \biggr)
\nonumber
\\[-8pt]
\\[-8pt]
\nonumber
&&\qquad=\frac{nj}{m}-\frac12 \biggl(\frac{nj^2}{m^2}+\frac1{12}
\frac{n(n^2-1)}{m^2} \biggr)+O \biggl(\frac1{\sqrt{n}} \biggr)
\\
&&\qquad=\frac{j}{c\sqrt{n}}-\frac{j^2}{2c^2n^2}-\frac1{24c^2}+O \biggl(
\frac1{\sqrt{n}} \biggr).\nonumber
\end{eqnarray}
The error term in \eqref{38} is uniform in $j$. For $j=o(n),
j^2/n^2=o(1)$ and \eqref{37} follows.

To bound the tails of the sum, first observe that \eqref{38} implies
that\break
$\frac{n!}{m^n}\bigl({m-j+{(n-1)}/2\atop n}\bigr)=e^{O (\sqrt{n} )}$
for all $j$. From Bernstein's inequality, if $X_i=\pm1$ with
probability $1/2$, $P(|X_1+\cdots+X_{n-1}|>a)\leq2e^{-a^2/(n-1)}$. Using
this, the sum over $|j|\geq An^{3/4}$ is negligible for $A$
sufficiently large.

The Gaussian approximation to the binomial works for $j\ll n^{2/3}$.
To bound the sum for $|j|$ between $n^{2/3}$ and $n^{3/4}$, observe
from \eqref{38} that in this range,
$\frac{n!}{m^n}\bigl({m-j+{(n-1)}/2\atop n}\bigr)=O (e^{n^{1/4}} )$.
Then \citeauthor{feller} [(\citeyear{feller}), page~195] shows
\[
\frac{\pmatrix{n-1\cr{(n-1)}/2+j}}{2^{n-1}}\sim\frac1{\sqrt{\pi
n/2}}e^{-(1/2)(j)^2/(n/4)-f (j/\sqrt{n/4} )}
\]
with $f(x)=\sum_{a=3}^\infty\frac{ (1/2 )^{a-1}+
(-1/2 )^{a-1}}{a(a-1)} (\frac1{\sqrt{n/4}} )^{a-2}x^a=
c_1\frac{x^4}{n}+c_2\frac{x^6}{n^2}+\cdots$ for explicit constants
$c_1,c_2,\ldots.$ For $\theta_1n^{2/3}\leq|j|\leq\theta_2n^{3/4}$, the
sum under study is dominated by $A\sum_{j\geq n^{2/3}}e^{-Bj^{1/6}}$
which tends to zero.

The separation distance is achieved at permutations with $\frac{n-1}2$
valleys (recall we are assuming that $n$ is odd). From \eqref{31},
\[
1-n!P_m \biggl(\frac{n-1}2 \biggr)=1-\frac{n!}{m^n}
\pmatrix{m+{(n-1)}/2
\cr
n}.
\]
The result now follows from \eqref{37} with $j=0$.
\end{pf}

%
\begin{rem*}
A similar argument allows asymptotic evaluation of total variation.
We have not carried out the details.
\end{rem*}

\subsection{Distribution of cycle type}\label{sec32}

The number of fixed points and the number of cycles are classic
descriptive statistics of a permutation. More generally, the number of
$i$-cycles for $1\leq i\leq n$ has been intensively studied
[\citet{shepp,DMP}]. This section investigates the distribution of cycle
type of a permutation $w$ produced from a shelf shuffler with $m$
shelves and $n$ cards. Similar results for ordinary riffle shuffles
appeared in Diaconis, \mbox{McGrath} and Pitman (\citeyear{DMP}), and closely related results in the type $B$
case (not in the language of shelf-shuffling) appear in \citeauthor{F1} (\citeyear{F1,F2}).
Recall also that in the case of one shelf, the shelf shuffler
generates one of the $2^{n-1}$ unimodal permutations uniformly at random.
The cycle structure of unimodal permutations has been studied in
several papers in the literature: see \citeauthor{F1} (\citeyear{F1,F2}), \citet{thibon} for
algebraic/combinatorial approaches and \citet{gannon,rogers} for
approaches using dynamical systems.

For what follows, we define
\[
f_{i,m}=\frac1{2i}\mathop{\sum_{d|i}}_{d\ \mathrm{odd}}
\mu(d) (2m)^{i/d},
\]
where $\mu$ is the M\"obius function of elementary number theory:
$\mu(d)=(-1)^k$ if $d$ is a square free number with $k$ prime factors,
$\mu(1)=1$ and $\mu(d)=0$ otherwise.

%
\begin{thmm}\label{cycle}
Let $P_m(w)$ denote the probability that a shelf shuffler with $m$
shelves produces a permutation $w$. Let $N_i(w)$ denote the number
of $i$-cycles of a permutation $w$ in $S_n$. Then
%
\begin{equation}
1+\sum_{n \geq1} u^n \sum
_{w \in S_n} P_{m}(w) \prod_{i \geq1}x_i^{N_i(w)}
= \prod_{i\geq1} \biggl( \frac{1+x_i (u/2m)^i}{1-x_i (u/2m)^i}
\biggr)^{f_{i,m}}. \label{cycform}
\end{equation}
\end{thmm}

\begin{pf}
By the proof of Theorem~\ref{shelf}, a permutation produced by a
shelf shuffler with $m$ shelves is equivalent to forgetting signs
after the inverse of a type $B$ riffle shuffle with $2m$ piles, then
conjugating by the longest element $n,n-1,\ldots,1$. Since a
permutation and its inverse have the same cycle type and conjugation
leaves cycle type invariant, the result follows from either
\citeauthor{F1} [(\citeyear{F1}), Theorem~7] or \citeauthor{F2} [(\citeyear{F2}), Theorem~9] both of which derived the
generating function for cycle type after type $B$ shuffles.
\end{pf}

Theorem~\ref{cycle} leads to several corollaries. We say that a random
variable $X$ is binomial $(n,p)$ if
$\pp(X=j)=\bigl({n\atop j}\bigr)p^j(1-p)^{n-j}, 0\leq j\leq n$, and that $X$ is
negative binomial with parameters $(f,p)$ if
$\pp(X=j)=\bigl({f+j-1\atop j}\bigr)p^j(1-p)^f, 0\leq j<\infty$.
As usual,
the products in the generating function on the right of (\ref{cycform}) 
correspond to the convolution
of the corresponding measures.

%
\begin{cor}\label{asymptot}
Let $N_i(w)$ be the number of $i$-cycles of a permutation~$w$.
\begin{longlist}[(1)]

\item[(1)] Fix $u$ such that $0<u<1$. Then choose a random number $N$ of
cards so that $\pp(N=n)=(1-u)u^n$. Let $w$ be produced by a shelf
shuffler with $m$ shelves and $N$ cards. Then any finite number of
the random variables $\{N_i\}$ are independent, and $N_i$ is
distributed as the convolution of a
binomial $ (f_{i,m},\frac{(u/2m)^i}{1+(u/2m)^i} )$ and a
negative binomial with parameters $(f_{i,m},(u/2m)^i)$.

\item[(2)] Let $w$ be produced by a shelf shuffler with $m$ shelves and $n$
cards. Then in the $n\to\infty$ limit, any finite number of the
random variables $\{N_i\}$ are independent. The $N_i$ are
distributed as the convolution of a
binomial $ (f_{i,m},\frac1{(2m)^i+1} )$ and a negative
binomial with parameters $(f_{i,m},(1/2m)^i)$.
\end{longlist}
\end{cor}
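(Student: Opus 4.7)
The plan is to use Theorem \ref{cycle} as the sole input, converting \eqref{cycform} to a clean probabilistic statement via randomization for part (1), and then extracting the $n\to\infty$ limit via singularity analysis of the resulting rational generating function for part (2).

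For part (1), I multiply \eqref{cycform} by $(1-u)$. Since $\pp(N=n)=(1-u)u^n$ and the shelf shuffle is independent of $N$ (the $n=0$ case contributing the value $1$), the left-hand side becomes exactly the joint probability generating function $E\bigl[\prod_i x_i^{N_i}\bigr]$. Setting all $x_i=1$ in \eqref{cycform} and comparing with the total probability $1/(1-u)$ gives the key algebraic identity
\begin{equation*}
\prod_{i\geq 1}\left(\frac{1+(u/2m)^i}{1-(u/2m)^i}\right)^{f_{i,m}}=\frac{1}{1-u},
\end{equation*}
which I use to absorb the prefactor $(1-u)$, rewriting
\begin{equation*}
E\Bigl[\prod_i x_i^{N_i}\Bigr]=\prod_{i\geq 1}\left(\frac{1+x_i(u/2m)^i}{1+(u/2m)^i}\right)^{f_{i,m}}\left(\frac{1-(u/2m)^i}{1-x_i(u/2m)^i}\right)^{f_{i,m}}.
\end{equation*}
The product structure over $i$ establishes independence of distinct $N_i$. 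In each $i$-factor, the first piece is the PGF of $\mathrm{Binomial}\bigl(f_{i,m},\,(u/2m)^i/(1+(u/2m)^i)\bigr)$ (written in the form $(1-p)+px_i$), and the second is the PGF of the negative binomial with parameters $\bigl(f_{i,m},\,(u/2m)^i\bigr)$, matching the claim.

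For part (2), I exploit that any joint distribution involves only finitely many coordinates: fix $I\geq 1$ and set $x_i=1$ for $i>I$. Each such factor in the rewriting above becomes $1$, leaving
\begin{equation*}
\sum_{n\geq 0}u^n\,E_n\Bigl[\prod_{i\leq I}x_i^{N_i}\Bigr]=\frac{H_I(u,\mathbf{x})}{1-u},
\end{equation*}
where
\begin{equation*}
H_I(u,\mathbf{x})=\prod_{i=1}^{I}\left[\frac{(1+x_i(u/2m)^i)(1-(u/2m)^i)}{(1-x_i(u/2m)^i)(1+(u/2m)^i)}\right]^{f_{i,m}}
\end{equation*}
is a rational function of $u$. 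Its only possible singularities solve $x_i(u/2m)^i=1$ or $(u/2m)^i=-1$, hence lie in $|u|\geq 2m>1$ for $|x_i|\leq 1$ and $m\geq 1$. So $H_I/(1-u)$ has a single simple pole on the closed unit disk at $u=1$, and standard singularity analysis (equivalently, $[u^n](H_I/(1-u))=\sum_{k\leq n}[u^k]H_I$ together with absolute convergence of $\sum_{k\geq 0}[u^k]H_I=H_I(1,\mathbf{x})$ since $H_I$ is analytic on the closed unit disk) yields
\begin{equation*}
\lim_{n\to\infty}E_n\Bigl[\prod_{i\leq I}x_i^{N_i}\Bigr]=H_I(1,\mathbf{x}).
\end{equation*}
Simplifying $(1/2m)^i/(1+(1/2m)^i)=1/((2m)^i+1)$ identifies $H_I(1,\mathbf{x})$ as the joint PGF of $I$ independent convolutions with the parameters claimed; convergence of PGFs on nonnegative integer random variables then gives the asserted convergence in distribution.

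The main obstacle is the singularity-analysis step in part (2), and truncating to finitely many indices is exactly what makes it tractable: the untruncated product need not converge for $|u|>1$, whereas fixing $x_i=1$ for $i>I$ turns $H_I$ into a genuine rational function with no singularity on the closed unit disk except the simple pole at $u=1$ inherited from the $1/(1-u)$ prefactor. Once this is verified, the coefficient asymptotics, and hence the asymptotic independence and explicit marginal distributions of each finite block $(N_1,\dots,N_I)$, follow immediately from Theorem \ref{cycle}.
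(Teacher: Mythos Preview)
Your proposal is correct and follows essentially the same approach as the paper: for part (1) you multiply \eqref{cycform} by $(1-u)$, use the identity obtained by setting all $x_i=1$ to absorb the prefactor, and read off the product of binomial and negative-binomial PGFs; for part (2) you set all but finitely many $x_i$ equal to $1$ and then use that $[u^n]\bigl(H_I(u)/(1-u)\bigr)\to H_I(1)$, which is exactly the paper's claim that ``the $n\to\infty$ limit of the coefficient of $u^n$ in $f(u)/(1-u)$ is $f(1)$'' when the Taylor series of $f$ converges at $u=1$. Your justification of this last step via analyticity of $H_I$ on the closed unit disk is a slight elaboration of the paper's one-line partial-sums argument, but the content is the same.
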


\begin{pf}
Setting all $x_i=1$ in equation \eqref{cycform} yields the equation
%
\begin{equation}
(1-u)^{-1}=\prod_{i\geq1} \biggl(
\frac{1+(u/2m)^i}{1-(u/2m)^i} \biggr)^{f_{i,m}}. \label{set1}
\end{equation}
Taking reciprocals of equation \eqref{set1} and multiplying by
equation \eqref{cycform} gives the equality
%
\begin{eqnarray}\label{arr}
&&(1-u)+\sum_{n\geq1}(1-u)u^n\sum
_{w\in S_n}P_{m}(w)\prod_{i\geq1}x_i^{n_i(w)}
\nonumber
\\[-8pt]
\\[-8pt]
\nonumber
&&\qquad=\prod_{i\geq1} \biggl(\frac{1+x_i(u/2m)^i}{1+(u/2m)^i}
\biggr)^{f_{i,m}}\cdot\biggl(\frac{1-(u/2m)^i}{1-x_i(u/2m)^i} \biggr
)^{f_{i,m}}.
\end{eqnarray}
This proves part 1 of the theorem, the first term on the right
corresponding to the convolution of binomials, and the second term to
the convolution of negative binomials.

The second part follows from the claim that if a generating function
$f(u)$ has a Taylor series which converges at $u=1$, then the $n \to
\infty$ limit of the coefficient of $u^n$ in $f(u)/(1-u)$ is $f(1)$.
Indeed, write the Taylor expansion $f(u)=\sum_{n=0}^{\infty} a_n u^n$
and observe that the coefficient of $u^n$ in $f(u)/(1-u)$ is
$\sum_{i=0}^n a_i$. Now apply the claim to equation \eqref{arr} with
all but finitely many $x_i$ equal to $1$.
\end{pf}

%
\begin{rem*}
For example, when $i=1, f_{i,m}=m$; the number of fixed points are
distributed as a sum of binomial $ (m,\frac1{2m+1} )$ and
negative binomial$ (m,\frac1{2m} )$. Each of these
converges to $\operatorname{Poisson}(1/2)$ and so the number of fixed points is
approximately $\operatorname{Poisson}(1)$. A similar analysis holds for the other
cycle counts. Corollary~\ref{asymptot} could also be proved by the
method of moments, along the lines of the arguments of \citet{DMP}
for the case of ordinary riffle shuffles.
\end{rem*}

For the next result, recall that the limiting distribution of the
large cycles of a uniformly chosen permutation in $S_n$ has been
determined by \citeauthor{go1} (\citeyear{go1,go2}), \citet{shepp},
\citeauthor{V12} (\citeyear{V12,V22}), and others. For instance the average length of the
longest cycle $L_1$ is approximately $0.63n$ and $L_1/n$ has a known
limiting distribution. The next result shows that even with a fixed
number of shelves, the distribution of the large cycles approaches
that of a uniform random permutation, as long as the number of cards
is growing. We omit the proof, which goes exactly along the lines of
the corresponding result for riffle shuffles in \citet{DMP}.

%
\begin{cor}
Fix $k$ and let $L_1(w), L_2(w),\ldots,L_k(w)$ be the lengths of the
$k$ longest cycles of $w \in S_n$ produced by\vadjust{\goodbreak} a shelf shuffler with
$m$ shelves. Then for $m$ fixed, or growing with $n$, as
$n\to\infty$,
\[
\bigl\llvert P_m\{L_1/n\leq t_1,
\ldots,L_k/n\leq t_n\}-P_\infty
\{L_1/n\leq t_1,\ldots,L_k/n\leq
t_n\}\bigr\rrvert\to0
\]
uniformly in $t_1,t_2,\ldots,t_k$.
\label{val}
\end{cor}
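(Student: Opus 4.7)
The plan is to transplant the cycle-index argument of Diaconis--McGrath--Pitman (DMP) for ordinary riffle shuffles, with Theorem~\ref{cycle} playing the role of the riffle-shuffle cycle index used there. The strategy has two main steps: first, compare the joint factorial moments of cycle counts under $P_m$ and under the uniform measure $U$; second, convert this moment convergence into the joint distributional convergence of the normalized longest cycle lengths $(L_1/n,\dots,L_k/n)$.

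For the first step, I would start from the generating function in Theorem~\ref{cycle}, written as $\prod_i \bigl((1+x_iy_i)/(1-x_iy_i)\bigr)^{f_{i,m}}$ with $y_i=(u/2m)^i$, and extract joint factorial moments $E_{P_m}[(N_{i_1})_{j_1}\cdots(N_{i_k})_{j_k}]$ by differentiation in the $x_i$ and coefficient extraction in $u$. The uniform analog is classical: $E_U[(N_{i_1})_{j_1}\cdots(N_{i_k})_{j_k}] = \prod_\ell i_\ell^{-j_\ell}$ whenever $\sum_\ell j_\ell i_\ell\le n$. The key input needed to match the two is the M\"obius-sum estimate $f_{i,m} = (2m)^i/(2i) + O((2m)^{i/3}/i)$ (the dominant term is $d=1$), which combined with $\log\bigl((1+z)/(1-z)\bigr) = 2z + O(z^3)$ shows that the shelf-shuffler factor for cycle length $i$ agrees with the corresponding uniform cycle-index factor $\exp(x_iu^i/i)$ up to multiplicative corrections of size $O((2m)^{-2i/3})$ and $O((u/2m)^{2i})$. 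These corrections are uniformly small for $i\ge\epsilon n$ and are uniform in $m$, so the factorial moments at cycle lengths of order $n$ match to leading order, uniformly in $m$.

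For the second step, the standard DMP device converts factorial-moment control into joint distributional convergence of $(L_1/n,\dots,L_k/n)$ through a two-scale argument: cycles of length at least $\epsilon n$ are handled by the moment comparison above, while cycles of length less than $\epsilon n$ are shown not to appear among the top $k$ except on an event of probability $O(\sqrt{\epsilon})$, via a Chebyshev-type bound using $E_{P_m}[N_i]\approx 1/i$, which follows from part~2 of Corollary~\ref{asymptot} together with $f_{i,m}/(2m)^i = 1/(2i) + o(1/i)$. The main obstacle is the uniformity in $(t_1,\dots,t_k)$ demanded by the statement. This reduces to uniformity of the factorial-moment comparison in the indices $i_\ell$ across the range $i_\ell\in[\epsilon n, n]$ and to the truncation estimate being uniform in $m$; both are satisfied because the shelf-shuffler $f_{i,m}$ obeys the same qualitative bounds as the riffle-shuffle analog used in DMP, so their argument transfers mutatis mutandis, with the regime of $m$ growing with $n$ being strictly easier (the shelf measure approaches $U$ outright).
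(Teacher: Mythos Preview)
Your proposal is correct and matches the paper's approach exactly: the paper omits the proof entirely, stating only that it ``goes exactly along the lines of the corresponding result for riffle shuffles in \cite{DMP},'' and your outline is precisely the DMP cycle-index argument transplanted via the generating function of Theorem~\ref{cycle}. In fact you have supplied more detail than the paper itself, and the key estimates you isolate (the M\"obius asymptotic $f_{i,m}\sim(2m)^i/(2i)$ and the matching of $\bigl((1+x_iy_i)/(1-x_iy_i)\bigr)^{f_{i,m}}$ with $\exp(x_iu^i/i)$) are the right ones.
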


As a final corollary, we note that Theorems~\ref{shelf} and
\ref{cycle} give the following generating function for the joint
distribution of permutations by valleys and cycle type. Note that this
gives the joint generating function for the distribution of
permutations by peaks and cycle type, since conjugating by the
permutation $n,n-1,\ldots,1$ preserves the cycle type and swaps valleys
and peaks.

%
\begin{cor}\label{joint}
Let $v(w)$ denote the number of valleys of a permutation $w$. Then
\begin{eqnarray*}
&&\frac{t}{1-t}+\sum_{n\geq1}u^n\sum
_{w\in S_n}\frac12\frac{(1+t)^{n+1}}{(1-t)^{n+1}} \biggl(
\frac{4t}{(1+t)^2} \biggr)^{v(w)+1} \prod_{i\geq1}x_i^{N_i(w)}
\\
&&\qquad=\sum_{m\geq1}t^m\prod
_{i\geq1}\biggl(\frac{1+x_iu^i}{1-x_i u^i}\biggr)^{f_{i,m}}.
\end{eqnarray*}
The same result holds with $v(w)$ replaced by $p(w)$, the number of
peaks of~$w$.
\end{cor}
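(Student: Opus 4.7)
The plan is to match coefficients of $t^m$ on both sides of the proposed identity, using Theorem \ref{shelf} to reinterpret the inner factor on the left and Theorem \ref{cycle} (after a rescaling of $u$) to evaluate the right.

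First I would unpack what Theorem \ref{shelf} actually says about the generating function in $t$. Since the probability $P_m(w)$ is the coefficient of $t^m$ in
$$\frac{1}{2(2m)^n}\frac{(1+t)^{n+1}}{(1-t)^{n+1}}\left(\frac{4t}{(1+t)^2}\right)^{v(w)+1},$$
the $(2m)^n$ is constant in $t$, so the coefficient of $t^m$ in the ``naked'' expression $\frac12\frac{(1+t)^{n+1}}{(1-t)^{n+1}}\left(\frac{4t}{(1+t)^2}\right)^{v(w)+1}$ equals $(2m)^n P_m(w)$. Now extract $[t^m]$ from the whole left-hand side of the corollary: the preamble $\frac{t}{1-t}=\sum_{m\ge 1}t^m$ contributes a $1$, and the double sum contributes
$$\sum_{n\ge 1} u^n \sum_{w\in S_n}(2m)^n P_m(w)\prod_{i\ge 1}x_i^{N_i(w)}.$$

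Next I would invoke Theorem \ref{cycle}, substituting $u\mapsto 2mu$ on both sides. Since the substitution turns $(u/2m)^i$ into $u^i$ and $u^n$ into $(2m)^n u^n$, it converts the theorem into
$$1+\sum_{n\ge 1}u^n\sum_{w\in S_n}(2m)^n P_m(w)\prod_{i\ge 1}x_i^{N_i(w)}=\prod_{i\ge 1}\left(\frac{1+x_iu^i}{1-x_iu^i}\right)^{f_{i,m}}.$$
This is precisely the expression obtained for $[t^m]$ of the LHS of the corollary. Meanwhile, extracting $[t^m]$ from the right-hand side of the corollary trivially gives $\prod_{i\ge 1}\left(\frac{1+x_iu^i}{1-x_iu^i}\right)^{f_{i,m}}$. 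Since the two $t^m$ coefficients agree for every $m\ge 1$, the identity holds.

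For the final sentence about peaks, I would simply note that conjugation by the longest element $n,n-1,\ldots,1$ is a bijection on $S_n$ preserving $N_i$ for every $i$ while exchanging valleys and peaks (it reverses the one-line notation, turning each local minimum into a local maximum and vice versa). Re-indexing the outer sum by this conjugation replaces $v(w)$ by $p(w)$ without affecting $\prod x_i^{N_i(w)}$, yielding the peak version of the identity.

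There is no serious obstacle; the work is really just bookkeeping with the $[t^m]$ extraction and the $u\mapsto 2mu$ rescaling. The only thing that requires any care is making sure the $\frac{t}{1-t}$ is accounted for, since it supplies the ``$1+$'' on the LHS of Theorem \ref{cycle} for every $m\ge 1$.
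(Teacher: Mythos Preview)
Your argument is correct and is exactly the derivation the paper has in mind: it states the corollary as an immediate consequence of Theorems \ref{shelf} and \ref{cycle} without writing out the details, and your $[t^m]$-extraction together with the $u\mapsto 2mu$ rescaling is precisely how those two theorems combine. One small inaccuracy in your parenthetical: conjugation by $w_0=n,n-1,\ldots,1$ is not merely reversal of the one-line word (reversal alone preserves valley and peak counts); it is reversal together with complementation $k\mapsto n+1-k$, and it is the complementation that swaps valleys with peaks---but your main claim (conjugation preserves cycle type and interchanges $v$ with $p$) is correct, and matches the paper's own justification.
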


%
\begin{rem*}
There is a large literature on the joint distribution of
permutations by cycles and descents
[\citet{gessel,DMP,reiner,F3,blessen,poirier}] and by cycles and cyclic
descents \citeauthor{F1} (\citeyear{F4,F1,F2}), but Corollary~\ref{joint} seems to be the
first result on the joint distribution by cycles and peaks.
\end{rem*}

\subsection{Distribution of RSK shape}\label{sec33}

In this section we obtain the distribution of the
Robinson--Schensted--Knuth (RSK) shape of a permutation $w$ produced
from a shelf shuffler with $m$ shelves and $n$ cards. For background
on the RSK algorithm, see \citet{stanley}. The RSK bijection associates
to a permutation $w \in S_n$ a pair of standard Young tableaux
$(P(w),Q(w))$ of the same shape and size $n$. $Q(w)$ is called the
recording tableau of $w$.

To state our main result, we use a symmetric function $S_{\lambda}$
studied in \citet{stem} [a special case of the extended Schur functions
in \citet{kerov}]. One definition of the $S_{\lambda}$ is as the
determinant
\[
S_\lambda(y)=\det(q_{\lambda_i-i+j}),
\]
where $q_{-r}=0$ for $r>0$ and for $r \geq0$, $q_r$ is defined by
setting
\[
\sum_{n\geq0}q_nt^n=\prod
_{i\geq1}\frac{1+y_it}{1-y_it}.
\]
We also let $f_{\lambda}$ denote the number of standard Young tableaux
of shape $\lambda$.\vadjust{\goodbreak}

%
\begin{thmm}%
\label{shape}
The probability that a shelf shuffler with $m$ shelves and $n$ cards
produces a permutation with recording tableau $T$ is equal to
\[
\frac1{2^n}S_\lambda\biggl(\frac1{m},\ldots,\frac1{m}
\biggr)
\]
for any $T$ of shape $\lambda$, where $S_{\lambda}$ has $m$ variables.
Thus the probability that $w$ has RSK shape $\lambda$ is equal to
\[
\frac{f_\lambda}{2^n}S_\lambda\biggl(\frac1{m},\ldots,\frac1{m} \biggr).
\]

\end{thmm}

\begin{pf}
By the proof of Theorem~\ref{shelf}, a permutation produced by a
shelf shuffler with $m$ shelves is equivalent to forgetting signs
after the inverse of a type $B$ $2m$-shuffle, and then conjugating
by the permutation $n, n-1, \ldots,1$. Since a permutation and its
inverse have the same RSK shape [\citet{stanley}, Section~7.13], and
conjugation by $n,n-1,\ldots,1$ leaves the RSK shape unchanged
[\citet{stanley}, Theorem~A1.2.10], the result follows from
\citeauthor{F2} [(\citeyear{F2}), Theorem 8], who studied RSK shape after type $B$ riffle shuffles.
\end{pf}

\subsection{Distribution of descents}\label{sec34}
A permutation $w$ is said to have a descent at position $i$ $(1 \leq i
\leq n-1)$ if $w(i)>w(i+1)$. We let $d(w)$ denote the total number of
descents of $w$. For example the permutation $3\enskip 1\enskip 5\enskip 4\enskip 2$ has
$d(w)=3$ and descent set~$1,3,4$. The purpose of this section is to
derive a generating function for the number of descents in a
permutation $w$ produced by a shelf shuffler with $m$ shelves and $n$
cards. More precisely, we prove the following result.

%
\begin{thmm}\label{desgen}
Let $P_m(w)$ denote the probability that a shelf shuffler with $m$
shelves and $n$ cards produces a permutation $w$. Letting $[u^n]
f(u)$ denote the coefficient of $u^n$ in a power series $f(u)$, one
has that
%
\begin{equation}
\sum_{w\in S_n}P_m(w)t^{d(w)+1}=
\frac{(1-t)^{n+1}}{2^n}\sum_{k\geq1}t^k
\bigl[u^n\bigr]\frac{(1+u/m)^{km}}{(1-u/m)^{km}}. \label{des}
\end{equation}
\end{thmm}

The proof uses the result about RSK shape mentioned in Section~\ref{sec33},
and symmetric function theory; background on these topics can be found
in the texts by \citet{stanley} and \citet{mac}, respectively.

\begin{pf}
Let $w$ be a permutation produced by a shelf shuffler with $m$
shelves and $n$ cards. The RSK correspondence associates to $w$ a
pair of standard Young tableaux $(P(w),Q(w))$ of the same shape.
Moreover, there is a notion of descent set for standard Young
tableaux, and by Lemma 7.23.1 of \citet{stanley}, the descent set of
$w$ is equal to the descent set of $Q(w)$. Let $f_{\lambda}(r)$
denote the number of standard Young tableaux of shape $\lambda$ with\vadjust{\goodbreak}
$r$ descents. Then Theorem~\ref{shape} implies that
\[
\pp\bigl(d(w)=r\bigr)=\sum_{|\lambda|=n}
\frac{f_{\lambda}(r)}{2^n}S_\lambda\biggl(\frac1{m},\ldots,\frac1{m}
\biggr).
\]

By equation (7.96) of \citet{stanley}, one has that
\[
\sum_{r\geq0}f_\lambda(r)t^{r+1}=(1-t)^{n+1}
\sum_{k\geq1}s_\lambda(1,\ldots,1)t^k,
\]
where in the $k$th summand, $s_{\lambda}(1,\ldots,1)$ denotes the Schur
function with $k$ variables specialized to 1. Thus
\begin{eqnarray*}
&&\sum_{r\geq0}\pp\bigl(d(w)=r\bigr)\cdot
t^{r+1}
\\
&&\qquad=\sum_{r\geq0}\sum
_{|\lambda|=n}\frac{f_\lambda(r)}{2^n}S_\lambda\biggl(
\frac1{m},\ldots,\frac1{m} \biggr)\cdot t^{r+1}
\\
&&\qquad=\frac{(1-t)^{n+1}}{2^n}\sum_{k\geq1}t^k\sum
_{|\lambda|=n}S_\lambda\biggl(\frac1{m},\ldots,
\frac1{m} \biggr)s_\lambda(1,\ldots,1)
\\
&&\qquad=\frac{(1-t)^{n+1}}{2^n}\sum_{k\geq1}t^k
\bigl[u^n\bigr]\sum_{n\geq0}\sum
_{|\lambda|=n}S_\lambda\biggl(\frac1{m},\ldots,\frac1{m}
\biggr)s_\lambda(1,\ldots,1)\cdot u^n.
\end{eqnarray*}
From Appendix A.4 of \citet{stem}, if $\lambda$ ranges over all
partitions of all natural numbers, then
\[
\sum_\lambda s_\lambda(x)S_\lambda(y)=
\prod_{i,j\geq1}\frac{1+x_i y_j}{1-x_i y_j}.
\]
Setting $x_1=\cdots=x_k=u$ and $y_1=\cdots=y_m=\frac1{m}$ completes
the proof of the theorem.
\end{pf}

For what follows we let $A_n(t)=\sum_{w \in S_n} t^{d(w)+1}$ be the
generating function of elements in $S_n$ by descents. This is known as
the Eulerian polynomial, and from page 245 of \citet{comtet}, one has
that
%
\begin{equation}
A_n(t)=(1-t)^{n+1}\sum_{k\geq1}t^kk^n.
\label{eul}
\end{equation}
This also follows by letting $m\to\infty$ in equation \eqref{des}.

The following corollary derives the mean and variance of the number of
descents of a permutation produced by a shelf shuffler.

%
\begin{cor}\label{descor}
Let $w$ be a permutation produced by a shelf shuffler with $m$
shelves and $n \geq2$ cards.
\begin{longlist}[(1)]
\item[(1)] The expected value of $d(w)$ is $\frac{n-1}{2}$.
\item[(2)] The variance of $d(w)$ is $\frac{n+1}{12}+\frac{n-2}{6m^2}$.
\end{longlist}
\end{cor}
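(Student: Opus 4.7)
The plan is to compute $F'(1)$ and $F''(1)$, where $F(t)=\sum_w P_m(w)t^{d(w)+1}$. Since $E(d)=F'(1)-1$ and $\var(d)=F''(1)+F'(1)-F'(1)^2$, both parts of the corollary follow once these two quantities are in hand. I will obtain them by expanding the right-hand side of Theorem \ref{desgen} in descending powers of $1/m^2$ and matching against the classical Eulerian identity \eqref{eul}.

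Writing $((1+u/m)/(1-u/m))^{km}=\exp(km\phi(u))$ with $\phi(u)=2\sum_{j\ge 0}u^{2j+1}/((2j+1)m^{2j+1})$, an odd power series in $u$, the coefficient $a_{n,k}:=[u^n]((1+u/m)/(1-u/m))^{km}$ is a polynomial in $k$ whose nonzero monomials $k^\ell$ all satisfy $\ell\equiv n\pmod 2$. Extracting $[u^n]$ from the factored product $\exp(2ku)\exp(2ku^3/(3m^2))\cdots$ yields
\[
\frac{a_{n,k}}{2^n}=\sum_{p=0}^{\lfloor n/2\rfloor}\frac{\alpha_p\,k^{n-2p}}{m^{2p}}
\]
with $\alpha_0=1/n!$ and, for $n\ge 3$, $\alpha_1=1/(12(n-3)!)$. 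Substituting this into Theorem \ref{desgen} and using \eqref{eul} in the form $A_r(t)=(1-t)^{r+1}\sum_{k\ge 1}t^kk^r$ gives
\[
F(t)=\sum_{p=0}^{\lfloor n/2\rfloor}\frac{\alpha_p(1-t)^{2p}A_{n-2p}(t)}{m^{2p}}.
\]

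The $p$th summand has a zero of order $2p$ at $t=1$, so for $p\ge 1$ it does not affect $F'(1)$, and for $p\ge 2$ it does not affect $F''(1)$. The $p=1$ summand contributes $2\alpha_1A_{n-2}(1)=2\alpha_1(n-2)!=(n-2)/(6m^2)$ to $F''(1)$ (using $A_r(1)=r!$). Applying the identities $E(d)=F'(1)-1$ and $\var(d)=F''(1)+F'(1)-F'(1)^2$ to the $p=0$ summand $F(t)=A_n(t)/n!$ recovers the classical uniform values $E(d)=(n-1)/2$ and $\var(d)=(n+1)/12$, so the stated mean and variance follow. (When $n=2$ the coefficient $\alpha_1$ vanishes and the corollary reduces to the uniform case on $S_2$.)

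The one nontrivial step is the expansion of $a_{n,k}/2^n$: identifying $\alpha_1=1/(12(n-3)!)$ amounts to noting that the monomial $k^{n-2}/m^2$ arises only from the single combination of $n-3$ copies of the $2ku$ term with exactly one copy of the $2ku^3/(3m^2)$ term in $\exp(km\phi(u))$. Every other coefficient extraction involved in $F'(1)$ and $F''(1)$ is then routine, relying only on the Eulerian identity \eqref{eul} and the standard uniform moments.
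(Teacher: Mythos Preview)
Your proof is correct and follows essentially the same route as the paper: expand $a_{n,k}=[u^n]\bigl((1+u/m)/(1-u/m)\bigr)^{km}$ as a polynomial in $k$, identify the top two coefficients, rewrite $F(t)$ via the Eulerian identity as a sum of terms $(1-t)^{2p}A_{n-2p}(t)$, and read off $F'(1)$ and $F''(1)$. The only cosmetic difference is that you extract $\alpha_1$ via the logarithmic expansion $\exp(km\phi(u))$, which makes the parity structure transparent, whereas the paper obtains the same coefficient by directly expanding $\sum_a\binom{km}{a}\binom{km+n-a-1}{n-a}$.
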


\begin{pf}
The first step is to expand $[u^n]
\frac{(1+u/m)^{km}}{(1-u/m)^{km}}$ as a series in $k$. One
calculates that
\begin{eqnarray*}
&&\bigl[u^n\bigr]\frac{(1+u/m)^{km}}{(1-u/m)^{km}}\\[-1pt]
&&\qquad=\frac1{m^n}
\sum_{a\geq0}\pmatrix{km
\cr
a}\pmatrix{km+n-a-1
\cr
n-a}
\\[-1pt]
&&\qquad=\frac1{m^n}\sum_{a\geq0} \biggl[
\frac{(km)\cdots(km-a+1)}{a!} \biggr] \\[-1pt]
&&\hspace*{42pt}\qquad{}\times\biggl[\frac{(km+n-a-1)\cdots
(km)}{(n-a)!} \biggr]
\\[-1pt]
&&\qquad=\frac1{n!} \biggl[2^nk^n+\frac{2^nn(n-1)(n-2)}{12m^2}k^{n-2}+
\cdots\biggr],
\end{eqnarray*}
where the $\cdots$ in the last equation denote terms of lower order in
$k$. Thus Theorem~\ref{desgen} gives
\begin{eqnarray*}
&&\sum_wP_m(w)t^{d(w)+1}\\[-1pt]
&&\qquad=
\biggl[\frac{(1-t)^{n+1}}{n!}\sum_{k\geq1}t^kk^n
\biggr]+ \frac{n-2}{12m^2}(1-t)^2 \biggl[\frac{(1-t)^{n-1}}{(n-2)!}\sum
_{k\geq1}t^kk^{n-2} \biggr]
\\[-1pt]
&&\qquad\quad{}+(1-t)^3C(t),
\end{eqnarray*}
where $C(t)$ is a polynomial in $t$. By equation \eqref{eul}, it
follows that
\[
\sum_wP_m(w)t^{d(w)+1}=
\frac{A_n(t)}{n!}+(1-t)^2\frac{n-2}{12m^2}\frac{A_{n-2}(t)}{(n-2)!}+(1-t)^3
C(t).
\]
Since the number of descents of a random permutation has mean
$(n-1)/2$ and variance $(n+1)/12$ for $n \geq2$, it follows that
$\frac{A_n'(1)}{n!} = \frac{(n+1)}{2}$ and also that
$\frac{A_n''(1)}{n!}= (3n^2+n-2)/12$. Thus
\[
\sum_wP_m(w)d(w)=\frac{n-1}2
\]
and
\[
\sum_w P_m(w) d(w)
\bigl[d(w)+1\bigr]=\frac{3n^2+n-2}{12}+\frac{n-2}{6m^2},
\]
and the result follows.\vadjust{\goodbreak}
\end{pf}

%
\begin{rems*}

\begin{itemize}
\item Part 1 of Corollary~\ref{descor} can be proved without
generating functions simply by noting that by the way the
shelf shuffler works, $w$ and its reversal are equally likely to be
produced.
\item Theorem~\ref{desgen} has an analog for ordinary riffle shuffles
which is useful in the study of carries in addition. See \citet{DFa}
for details.
\end{itemize}
\end{rems*}

\section{Iterated shuffling}\label{sec4}

This section shows how to analyze repeated shuffles. Section~\ref{newsec41}
shows how to combine shuffles. Section~\ref{newsec42} gives a clean bound for
the separation distance.

\subsection{Combining shuffles}\label{newsec41}

To describe what happens to various combinations of shuffles, we need
the notion of a signed $m$-shuffle. This has the following geometric
description: divide the unit interval into sub-intervals of length
$\frac1{m}$; each sub-interval contains the graph of a straight line
of slope $\pm m$. The left-to-right pattern of signs $\pm s$ is
indicated by a vector $x$ of length~$m$. Thus if $m=4$ and $x=++++$,
an $x$-shuffle is generated as shown on the left side of Figure~\ref{fig2}.
If $m=4$ and $x=+ - - +$, the graph becomes that of the right-hand side of
Figure~\ref{fig2}. Call this function $f_x$.

%
\begin{figure}

\includegraphics{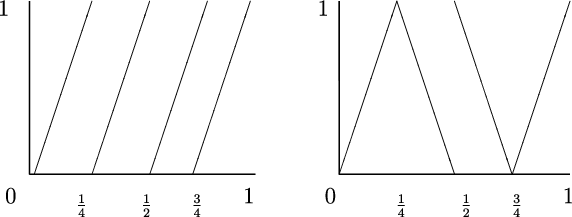}

\caption{{Left}: $m=4, x=+ + + +$; {Right}: $m=4, x=+ - - +$.}
\label{fig2}
\end{figure}

%
%
%
%

The shuffle proceeds as in the figure with $n$ points dropped at
random into the unit interval, labeled left to right,
$y_1,y_2,\ldots,y_n$ and then permuted by $f_x$. In each case there is
a simple forward description: the deck is cut into $m$ piles by a
multinomial distribution and piles corresponding to negative
coordinates are reversed. Finally, all packets are shuffled together
by the GSR procedure in which one drops each card sequentially with probability
proportional to packet size. Call the associated measure on permutations~$P_x$.

%
\begin{rem*}
Thus, ordinary riffle shuffles are $++$ shuffles. The shelf shuffle
with 10 shelves is an inverse $+-+-\cdots+-$ (length 20) shuffle in
this notation.
\end{rem*}

The following theorem reduces repeated shuffles to a single
shuffle. To state it, one piece of notation is needed. Let
$x=(x_1,x_2,\ldots,x_a)$ and $y=(y_1,y_2,\ldots,y_b)$ be two sequences
of $\pm$ signs. Define a sequence of length $ab$ as $x\ast
y=y^{x_1},y^{x_2},\ldots,y^{x_a}$ with
$(y_1,\ldots,y_b)^1=(y_1,\ldots,y_b)$ and
$(y_1,\ldots,y_b)^{-1}=(-y_b,-y_{b-1},\ldots,-y_1)$. This is an
associative product on strings; it is not commutative. Let $P_x$ be
the measure induced on $S_n$ (forward shuffles).

%
\begin{defn*}
\begin{eqnarray*}
(+++)\ast(++)&=&++++++,
\\
(+-)\ast(+-)&=&+-+-,
\\
(+-)\ast(++-+)&=&++-+-+--\!.
\end{eqnarray*}
\end{defn*}

%
\begin{thmm}\label{thm42}
If $x$ and $y$ are $\pm1$ sequences of length $a$ and $b$,
respectively, then
\[
P_x\ast P_y=P_{x\ast y}.
\]
\end{thmm}

\begin{pf}
In outline, this follows most easily from the geometric description underlying
Figures~\ref{fig1} and~\ref{fig2}. If a uniformly chosen point in
$[0,1]$ is
expressed base $a$, the ``digits'' are uniform and independently
distributed in $\{0,1,\ldots,a-1\}$. Because of this, iterating the
maps on the same uniform points gives the convolution. The iterated
maps have the claimed pattern of slopes by a simple geometric
argument.

In more detail, consider a $\pm$ string $x$ of length $m$.
The function $f_x$ sends $[0,1]$ to itself by mapping
$\eta\in[\frac{i-1}{m},\frac{i}{m}]$ to
$x_i m \eta$ (mod 1). If $x_i$ is positive, all points
in $[\frac{i-1}{m},\frac{i}{m}]$
are sent to
$[0,1]$ in an order preserving way. If $x_i$ is negative, the order
is reversed. In either case, the map $f_x$ is $m$ to $1$ and measure preserving
on $[0,1]$ (i.e., $f_x^{-1}$ preserves Lebesgue measure).
Now consider $f_y \circ f_x$ (apply $f_x$ first, then $f_y$),
where $x$ is of length $m$ and $y$ is of length $n$. The composition
sends all elements $\eta\in[\frac{i-1}{mn},\frac{i}{mn}]$
to $\operatorname{sgn}(i)$ $m n \eta$ (mod 1); where $\operatorname{sgn}(i$) is $\pm1$. A simple
argument shows $\operatorname{sgn}(i)$ is given
by the $x*y$ rule.

There is a subtle point: the convolution $P_x*P_y$
(first apply $P_x$ then independently $P_y$)
involves independent shuffles while
$f_y \circ f_x$ and $f_{x*y}$ are applied to a single random uniform
set of points. The induced measures are the same
because the digits of $\eta\in[0,1]$
base $m$ are independent and uniformly distributed in
$ \{ 0, 1, \ldots, m \}$
and $f_x$ preserves measure.
It follows that the image of independent uniform points in $ [0,1]$
under $f_x$ are independent and uniform even conditional
on the induced permutation which is determined by how many points fall
in each $[\frac{i-1}{m},\frac{i}{m}]$.
\end{pf}

%
\begin{cor}
The convolution of $k +-$ shuffles is a $+-+-\cdots+-$ ($2^k$ terms)
shuffle. Further, the convolution of a shelf shuffler with $m_1$ and
then $m_2$ shelves is the same as a shelf shuffler with $2m_1m_2$
shelves.
\label{cor43}
\end{cor}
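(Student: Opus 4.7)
The plan is to derive both claims directly from Theorem~\ref{thm42} by unpacking the sign-string product~$*$. Write $\alpha_j$ for the alternating sequence $+-+-\dots+-$ of length $2j$. The single observation that drives everything is that $(+-)$ is fixed by the sign-reversal involution $y\mapsto y^{-1}=(-y_b,\dots,-y_1)$: if $y=(+,-)$, then $y^{-1}=(-(-),-(+))=(+,-)$. More generally $(\alpha_j)^{+1}=(\alpha_j)^{-1}=\alpha_j$, since $\alpha_j$ ends in $-$ and reverse-plus-negate recovers $\alpha_j$.

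For the first statement, I would induct on $k$. The base case $k=1$ is trivial. For the step, associativity of $*$ and iterated application of Theorem~\ref{thm42} give $(P_{+-})^{*k}=P_{(+-)^{*k}}$ with $(+-)^{*k}=(+-)^{*(k-1)}*(+-)$. By inductive hypothesis $(+-)^{*(k-1)}=\alpha_{2^{k-2}}$, a string of length $2^{k-1}$. Applying the definition $x*y=(y^{x_1},\dots,y^{x_a})$ with $y=(+-)$ and using the invariance $(+-)^{\pm1}=(+-)$, the product is simply $(+-)$ concatenated $2^{k-1}$ times, which is $\alpha_{2^{k-1}}$ of length~$2^k$, as desired.

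For the second statement, the remark preceding Theorem~\ref{thm42} identifies the shelf shuffler with $m$ shelves as the inverse of the forward $\alpha_m$-shuffle; that is, $\mathrm{Sh}_m=P_{\alpha_m}^{\vee}$, where $P^{\vee}(w):=P(w^{-1})$. A direct computation on the group gives $P^{\vee}*Q^{\vee}=(Q*P)^{\vee}$, so
\begin{equation*}
\mathrm{Sh}_{m_1}*\mathrm{Sh}_{m_2}
=P_{\alpha_{m_1}}^{\vee}*P_{\alpha_{m_2}}^{\vee}
=(P_{\alpha_{m_2}}*P_{\alpha_{m_1}})^{\vee}
=P_{\alpha_{m_2}*\alpha_{m_1}}^{\vee}
\end{equation*}
by Theorem~\ref{thm42}. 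It remains to verify $\alpha_{m_2}*\alpha_{m_1}=\alpha_{2m_1m_2}$. By the definition of $*$ and the invariance $(\alpha_{m_1})^{\pm1}=\alpha_{m_1}$, the product is $\alpha_{m_1}$ concatenated $2m_2$ times. Since each copy starts with $+$ and ends with $-$, concatenation preserves the alternation; the total length is $4m_1m_2$, giving $\alpha_{2m_1m_2}$. Therefore $\mathrm{Sh}_{m_1}*\mathrm{Sh}_{m_2}=P_{\alpha_{2m_1m_2}}^{\vee}=\mathrm{Sh}_{2m_1m_2}$.

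I do not anticipate any genuine obstacle: both parts collapse to the one-line fixed-point observation $(\alpha_j)^{\pm1}=\alpha_j$. The only care needed is bookkeeping the fact that inversion reverses convolution order, and matching the paper's convention for whether $P_x$ represents the forward signed shuffle or its inverse; once these are set up, each conclusion is a short computation with the string operation~$*$.
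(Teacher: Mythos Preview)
Your argument is correct and is exactly the computation the paper has in mind: the corollary is stated without proof, immediately after Theorem~\ref{thm42} and the worked example $(+-)\ast(+-)=+-+-$, as an immediate consequence of the string product. Your fixed-point observation $(\alpha_j)^{\pm1}=\alpha_j$ is precisely the one-line check that makes both parts fall out, and your handling of the inversion $P^{\vee}\ast Q^{\vee}=(Q\ast P)^{\vee}$ for the shelf-shuffler statement is the right bookkeeping.
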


\subsection{Bounds for separation distance}\label{newsec42}

The following theorem gives a bound for separation (and so for total
variation) for a general $P_x$ shuffle on $S_n$.

%
\begin{thmm}\label{thm44}
For any $\pm1$ sequence $x$ of length $a$, with $P_x$ the associated
measure on $S_n$, and $\separ(P_x)$ from \eqref{pound},
%
\begin{equation}
\separ(P_x)\leq1-\prod_{i=1}^{n-1}
\biggl(1-\frac{i}{a} \biggr). \label{41}
\end{equation}
\end{thmm}

\begin{pf}
It is easiest to argue using shuffles as in Description~\ref{desc1}.
There, the
backs of cards are labeled, independently and uniformly, with
symbols $1,2,\ldots,a$. For the inverse shuffle, all cards labeled 1
are removed, keeping them in their same relative order, and placed
on top followed by the cards labeled 2 (placed under the~1s) and so
on, with the following proviso: if the $i$th coordinate of $x$ is
$-1$, the cards labeled $i$ have their order reversed; so if they
are 1, 5, 17 from top down, they are placed in order 17, 5, 1. All
of this results in a single permutation drawn from $P_x$. Repeated
shuffles are modeled by labeling each card with a vector of symbols.
The $k$th shuffle is determined by the $k$th coordinate of this
vector. The first time $t$ that the first $k$ coordinates of those
$n$ vectors are all distinct forms a strong stationary time. See
\citet{AD} or \citet{fulman98} for further details. The usual bound
for separation yields
\[
\separ(P_x)\leq P\{\mbox{all $n$ labels are distinct}\}.
\]
The bound \eqref{41} now follows from the classical birthday problem.
\end{pf}

%
\begin{rems*}

\begin{itemize}
\item For $a$ large with respect to $n$, the right-hand side is
well-approximated by $1-e^{-({n(n-1)})/{(2a)}}$. This is small when
$n^2\ll a$.
\item The theorem gives a clean upper bound on the distance to
uniformity. For example, when $n=52$, after $8$ ordinary riffle
shuffles (so $x=++\cdots++$, length 256), the bound \eqref{41} is
$\separ(P_x)\leq0.997$, in agreement with Table~1 of \citet{assaf}. For
the actual shelf shuffle with $x=+-+-\cdots+-$ (length 20), the bound
gives $\separ(P_x)=1$ but $\separ(P_x\ast P_x)\leq0.969$ and
$\separ(P_x\ast P_x\ast P_x)\leq0.153$.
\item The bound in Theorem~\ref{thm44} is simple and general. However,
it is not sharp for the original shelf shuffler. The results of
Section~\ref{newsec33} show that $m=cn^{3/2}$ shelves suffice to make
$\separ(P_m)$ small when $c$ is large. Theorem~\ref{thm44} shows that
$m=cn^2$ steps suffice.
\item
The upper bound in \eqref{41} is achieved. If the length $a$ sequence
consists only of $+$ signs, we have the ordinary ``riffle shuffle.''
Then the formula in Bayer and Diaconis (\citeyear{bayer}) for the chance of a
permutation after an $a$ shuffle implies that the separation distance
is attained for the permutation $n, n-1,\ldots, 1$, and is equal to
the right-hand side of
\eqref{41}.

\end{itemize}
\end{rems*}

\section{Practical tests and conclusions}\label{sec5}

The engineers and executives who consulted us found it hard to
understand the total variation distance. They asked for more
down-to-earth notions of discrepancy. This section reports some ad hoc
tests which convinced them that the machine had to be used
differently. Section~\ref{sec51} describes the number of cards guessed
correctly. Section~\ref{sec52} briefly describes three other tests.
Section~\ref{sec53} describes conclusions and recommendations.

\subsection{Card guessing with feedback}\label{sec51}

Suppose, after a shuffle, cards are dealt face-up, one at a time, onto
the table. Before each card is shown, a guess is made at the value of
the card. Let $X_i, 1\leq i\leq n$, be one or zero as the $i$th guess
is correct and $T_n=X_1+\cdots+X_n$ the total number of correct
guesses. If the cards were perfectly mixed, the chance that $X_1=1$ is
$1/n$, the chance that $X_2=1$ is $1/(n-1), \ldots,$ that $X_i=1$
is $1/(n-i+1)$. Further, the $X_i$ are independent. Thus elementary
arguments give the following.

%
\begin{prop}\label{prop51}
Under the uniform distribution, the number of cards guessed correctly
$T_n$ satisfies:
\begin{itemize}
\item$E(T_n)=\frac1{n}+\frac1{n-1}+\cdots+1\sim\log
n+\gamma+O (\frac1{n} )$ with $\gamma\doteq0.577$ Euler's constant.
\item$\var(T_n)=\frac1{n} (1-\frac1{n} )+\frac1{n-1} (1-\frac1{n-1}
)+\cdots+
\frac12 (1-\frac12 )\sim\log n+\gamma-\frac{\pi^2}6+O (\frac1{n} )$.
\item Normalized by its mean and variance, $T_n$ has an approximate
normal distribution.
\end{itemize}
\end{prop}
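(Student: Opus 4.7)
The plan is to reduce everything to the observation that, under the uniform measure on $S_n$, the sequence of indicators $X_1,\dots,X_n$ can be modeled as a collection of independent Bernoulli variables with explicit parameters; all three assertions then follow from standard one-line computations.

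First I would fix any reasonable guessing strategy (for concreteness, at step $i$ guess any card not yet seen, breaking ties arbitrarily) and argue that under uniform shuffling the event $\{X_i=1\}$ has probability $1/(n-i+1)$ regardless of the past. By exchangeability of a uniform random permutation, conditional on the first $i-1$ revealed cards the $i$-th card is uniform on the $n-i+1$ cards not yet seen; the guess is a measurable function of the past, so
\[
\pp(X_i=1\mid X_1,\dots,X_{i-1}) \;=\; \frac{1}{n-i+1}
\]
deterministically. This conditional probability being constant is precisely what independence of $X_1,\dots,X_n$ means. (The distribution of $(X_1,\dots,X_n)$ is in fact the same for every optimal strategy, which is why ``elementary arguments'' suffice.)

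Having independence, the mean is immediate from linearity: $E(T_n)=\sum_{i=1}^{n}\frac{1}{n-i+1}=H_n$, and the standard expansion $H_n=\log n+\gamma+O(1/n)$ gives the first bullet. For the variance, independence yields $\var(T_n)=\sum_{i=1}^{n}p_i(1-p_i)$ with $p_i=1/(n-i+1)$, which after reindexing $k=n-i+1$ equals $H_n-\sum_{k=1}^n k^{-2}$; using $\sum_{k=1}^\infty k^{-2}=\pi^2/6$ together with $\sum_{k>n}k^{-2}=O(1/n)$ yields the claimed asymptotic $\log n+\gamma-\pi^2/6+O(1/n)$.

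For the central limit statement I would invoke the Lindeberg--Feller CLT for triangular arrays of independent Bernoullis. Since $|X_i-p_i|\le 1$ and $\sigma_n^2=\var(T_n)\sim\log n\to\infty$, for any fixed $\epsilon>0$ we have $\epsilon\sigma_n>1$ eventually, so the indicators $1_{\{|X_i-p_i|>\epsilon\sigma_n\}}$ vanish for all $i$ and the Lindeberg condition is satisfied trivially; hence $(T_n-E T_n)/\sqrt{\var T_n}\Rightarrow\mathcal{N}(0,1)$. The only mildly non-routine point is the independence step — it is easy to believe but worth articulating precisely, since the guess at time $i$ depends on the revealed past; once one sees that the conditional success probability collapses to the deterministic value $1/(n-i+1)$, independence and the rest of the proposition drop out at once.
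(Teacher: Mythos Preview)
Your proposal is correct and follows exactly the line the paper indicates: the paper itself does not give a formal proof but simply notes, just before the proposition, that under uniformity $\pp(X_i=1)=1/(n-i+1)$ and the $X_i$ are independent, and then says ``elementary arguments give the following.'' Your write-up supplies precisely those elementary arguments --- the conditional-uniformity justification of independence, the harmonic-number computations for mean and variance, and Lindeberg--Feller for the CLT --- so there is no substantive difference in approach, only in the level of detail.
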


When $n=52$, $T_n$ has mean approximately 4.5, standard deviation
approximately $\sqrt{2.9}$ and the number of correct guesses is
between 2.7 and 6.3, 70\% of the time.

Based on the theory developed in Section~\ref{sec3} we constructed a guessing
strategy---conjectured to be optimal---for use after a shelf
shuffle.

\textit{Strategy}.
\begin{itemize}
\item To begin, guess card 1.
\item If guess is correct, remove card 1 from the list of available
cards. Then guess card 2, card 3, $\ldots.$
\item If guess is incorrect and card $i$ is shown, remove card $i$
from the list of available cards and guess card $i+1$, card $i+2,
\ldots.$
\item Continue until a descent is observed (order reversal with the
value of the current card smaller than the value of the previously
seen card). Then change the guessing strategy to guess the
next-smallest available card.
\item Continue until an ascent is observed, then guess the
next-largest available card, and so on.
\end{itemize}

%
\begin{table}
\caption{Mean and variance for $n=52$ after a shelf shuffle with $m$
shelves under the conjectured optimal strategy}\label{table3}\vspace*{-3pt}
\begin{tabular*}{\textwidth}{@{\extracolsep{\fill}}ld{2.1}d{2.1}d{2.1}ccc@{}}
\hline
$\bolds{m}$&\multicolumn{1}{c}{\textbf{1}}&\multicolumn{1}{c}{\textbf{2}}&\multicolumn{1}{c}{\textbf{4}}&
\multicolumn{1}{c}{\textbf{10}}&\multicolumn{1}{c}{\textbf{20}}&\multicolumn{1}{c@{}}{\textbf{64}}\\
\hline
Mean &39&27&17.6&9.3&6.2&4.7\\
Variance&3.2&5.6&6.0&4.7&3.8&3.1\\
\hline
\end{tabular*}  \vspace*{-3pt}
\end{table}

%
\begin{figure}[b]\vspace*{-3pt}

\includegraphics{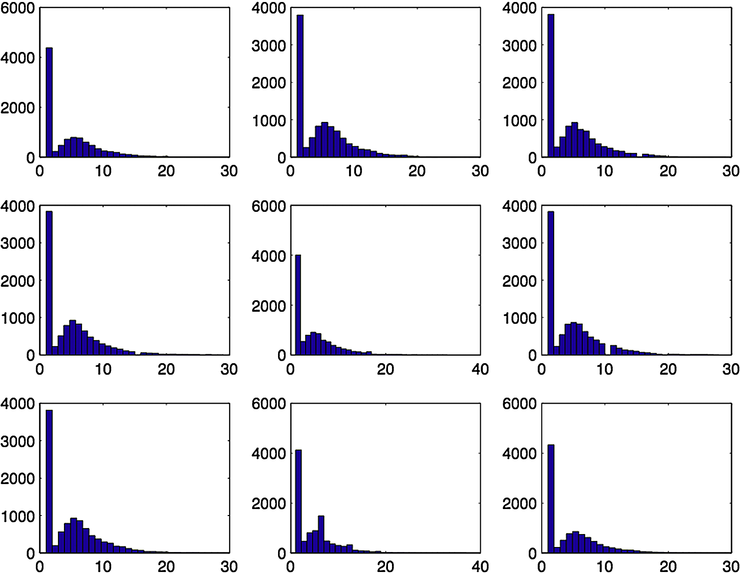}\vspace*{-3pt}

\caption{9 spacings from a 10-shelf shuffle; $j$ varies from top left
to bottom right, $1\leq j\leq9$.}
\label{fig3}
\end{figure}

A Monte Carlo experiment was run to determine the distribution of
$T_n$ for \mbox{$n=52$} with various values of $m$ (10,000 runs for each
value). Table~\ref{table3} shows the mean and variance for various numbers\vadjust{\goodbreak}
of shelves. Thus for the actual shuffler, $m=10$ gives about 9.3
correct guesses versus 4.5 for a well-shuffled deck. A~closely related
study of optimal strategy for the GSR measure (without feedback) is
carried out by \citet{ciucu}.\vspace*{-3pt}

\subsection{Three other tests}\label{sec52}

For the shelf shuffler with $m$ shelves, an easy argument shows that
the chance that the original top card is still on top is at least
$1/2m$ instead of $1/n$. When $n=52$, this is $1/20$ versus $1/52$. The
chance that card 2 is on top is approximately
$\frac1{2m} (1-\frac1{2m} )$ while the chance that card 2 is
second from the top is roughly $\frac1{(2m)^2}$. The same
probabilities hold for the bottom cards. While not as striking as the
guessing test of Section~\ref{sec51}, this still suggests that the
machine is
``off.''\vadjust{\goodbreak}

Our second test supposed that the deck was originally arranged with
all the red cards on top and all the black cards at the bottom. The
test statistic is the number of changes of color going through the
shuffled deck. Under uniformity, simulations show this has mean 26 and
standard deviation~3.6. With a 10-shelf machine, simulations showed
$17\pm1.83$, a noticeable deviation.
The third test is based on the spacings between cards originally near
the top of the deck. Let $w_j$ denote the position of the card
originally at position $j$ from the top. Let $D_j=|w_j-w_{j+1}|$.
Figure~\ref{fig3} shows a histogram of $D_j$ for $1\leq j\leq9$, from a
simulation with $n=52$ based on a 10-shelf shuffler. Figure~\ref{fig4} shows
histograms for the same statistics for a well-shuffled deck; there are
striking discrepancies.\vspace*{-3pt}

%
\begin{figure}

\includegraphics{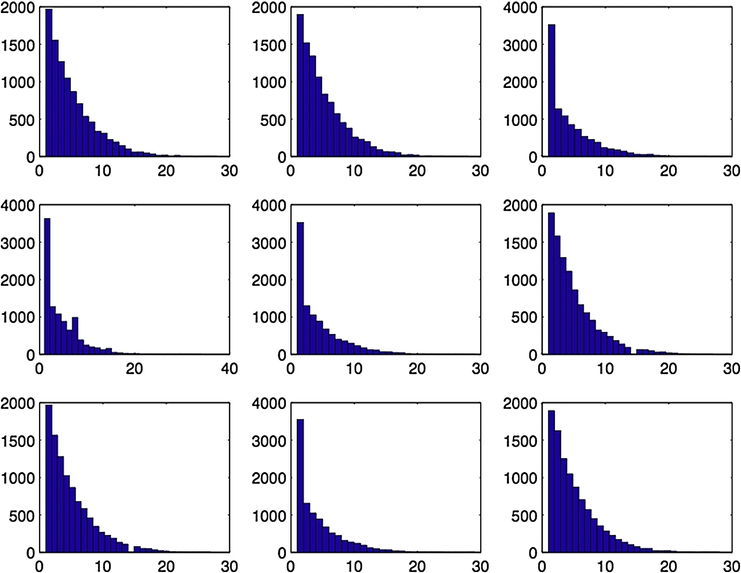}\vspace*{-3pt}

\caption{9 spacings from a uniform shuffle; $j$ varies from top left
to bottom right, $1\leq j\leq9$.}
\label{fig4}\vspace*{-3pt}
\end{figure}

\subsection{Conclusions and recommendations}\label{sec53}

The study above shows that a single iteration of a 10-shelf shuffler
is not sufficiently random. The president of the company responded,
``We are not pleased with your conclusions, but we believe them and
that's what we hired you for.''

We suggested a simple alternative: use the machine twice. This results
in a shuffle equivalent to a 200-shelf machine. Our mathematical
analysis and further tests, not reported here, show that this\vadjust{\goodbreak}
\textit{is} adequately random. Indeed, Table~\ref{table1} shows, for total
variation, this is equivalent to 8-to-9 ordinary riffle
shuffles.\vspace*{-3pt}

\section*{Acknowledgments}
We thank the Editor and two anonymous referees for their constructive reviews.\vspace*{-3pt}

%


\printaddresses

\end{document}